\theoremstyle{plain}
\newtheorem{thm}{Theorem}[section]
\newtheorem*{main}{Main~Theorem}
\newtheorem*{Alex}{Aleksandrov's~Theorem}
\newtheorem{remark}[thm]{Remark}  
\newtheorem{claim}[thm]{Claim}
\newtheorem{corollary}[thm]{Corollary}
\newtheorem{lemma}[thm]{Lemma}
\def\Xint#1{\mathchoice
{\XXint\displaystyle\textstyle{#1}}%
{\XXint\textstyle\scriptstyle{#1}}%
{\XXint\scriptstyle\scriptscriptstyle{#1}}%
{\XXint\scriptscriptstyle\scriptscriptstyle{#1}}%
\!\int}
\def\XXint#1#2#3{{\setbox0=\hbox{$#1{#2#3}{\int}$ }
\vcenter{\hbox{$#2#3$ }}\kern-.6\wd0}}
\def\dashint{\Xint-}
\newcommand\ackname{Acknowledgements}
  \newenvironment{acknowledgements}{%
      \titlepage
      \null\vfil
      \@beginparpenalty\@lowpenalty
      \begin{center}%
        \bfseries \ackname
        \@endparpenalty\@M
      \end{center}}%
     {\par\vfil\null\endtitlepage}
  \newenvironment{acknowledgements}{%
      \if@twocolumn
        \section*{\abstractname}%
      \else
        \small
        \begin{center}%
          {\bfseries \ackname\vspace{-.5em}\vspace{\z@}}%
        \end{center}%
        \quotation
      \fi}
      {\if@twocolumn\else\endquotation\fi}
\renewcommand{\div}{\operatorname{div}}
\newcommand{\trace}{\operatorname{tr}}
\newcommand{\ess}{\operatorname{ess}}
\newcommand{\ip}[2]{\ensuremath{\langle #1 , #2 \rangle}}
\newcommand{\tp}{\ensuremath{\textnormal{\texttt{p}}}}
\newcommand{\tq}{\ensuremath{\textnormal{\texttt{q}}}}
\theoremstyle{definition}
\newtheorem{definition}{Definition}
\theoremstyle{remark}
\numberwithin{subcase}{case}
\numberwithin{equation}{section}
\begin{document}
\title[Equivalence of solutions to the $\tp(\cdot)$-Laplacian in $\mathbb{R}^n$]{A New Proof of the Equivalence of Weak and Viscosity Solutions to the Homogeneous $\tp(\cdot)$-Laplacian in $\mathbb{R}^n$}

\author{Zachary Forrest}
\email{zachary9@usf.edu}
\address{Department of Mathematics and Statistics,
University of South Florida, Tampa, FL 33620, USA}

\author{Robert D. Freeman}
\email{rdf5256@psu.edu}
\address{Department of Mathematics, Pennsylvania State University Berks, Reading, PA 19610, USA}

\thanks{This paper is part of the first author's Ph.D. thesis.}
\subjclass[2020]{Primary: 53C17, 35D40  31C45, 35H20 ; Secondary:  31E05, 22E25}
\keywords{Non-linear potential theory, weak solutions, viscosity solutions, $\tp(\cdot)$-Laplacian in $\mathbb{R}^n$}

\maketitle

\begin{abstract}
We present a new proof for the equivalence of potential theoretic weak solutions and viscosity solutions to the $\tp(\cdot)$-Laplace equation in $\mathbb{R}^n$.  The proof of the equivalence in the variable exponent case in Euclidean space was  first given by Juutinen, Lukkari, and Parviainen (2010) and extended the equivalence of potential theoretic weak solutions and viscosity solutions to the $\tp$-Laplace equation in $\mathbb{R}^n$, given by Juutinen, Lindqvist, and Manfredi (2001). In both the fixed exponent case and the variable exponent case, the main argument is based on the maximum principle for semicontinuous functions, several approximations, and also applied the full uniqueness machinery of the theory of viscosity solutions. This paper extends the approach of Julin and Juutinen (2012) for the fixed exponent case, and so we employ infimal convolutions to present a direct, new proof for the equivalence of potential theoretic weak solutions and viscosity solutions to the $\tp(\cdot)$-Laplace equation in $\mathbb{R}^n$. 
\end{abstract}

\begin{acknowledgements}
 The authors would like to thank Dr. Thomas Bieske for his numerous discussions and patient guidance; Dr. Diego Riccioti for his insights on regularity; and Dr. David Cruz-Uribe, whose interest and encouragement at the 2022 Ohio River Analysis Meeting has led us to publish our results.
\end{acknowledgements}

\tableofcontents

\section{Introduction and Motivation}\label{intro}
The model case for variational problems and partial differential equations with non-standard growth conditions is the $\tp(\cdot)$-Laplace equation, which is the generalization of the constant $\tp$-Laplace case. The reasons for studying this variable exponent case are numerous; modern image restoration techniques are modeled by the $\tp(\cdot)$-Laplacian (see, for example, \cite{CLR}), as is fluid flow of eltrorhealogical fluids (refer to \cite{D} and \cite{R} for examples). The purpose of this paper is to present a new proof of the equivalence of \textit{potential theoretical weak solutions} and \textit{viscosity solutions} to the homogeneous $\tp(\cdot)$-Laplace equation (see Section \ref{solutions} for definitions). 

The equivalence of potential theoretic weak solutions and viscosity solutions to the homogeneous $\tp$-Laplace equation was shown by Juutinen, Mandfredi, and Lindqvist (see \cite{JLM}).  In \cite{JLP}, Juutinen,  Lukkari, and Parviainen extended the results of \cite{JLM} to show the equivalence of potential theoretic weak solutions and viscosity solutions to the homogeneous $\tp(\cdot)$-Laplace equation. (See Section \ref{solutions} for definitions.) In both the fixed exponent and variable exponent cases, the authors exploited the maximum principle for semicontinuous functions and required, as the authors state in the variable exponent case, “delicate” approximations. Additionally, both cases needed the full uniqueness machinery of the theory of viscosity solutions.  This, perhaps, led to the motivation for a new, more direct proof of the equivalence of potential theoretic weak solutions and viscosity solutions to the $\tp$-Laplace equation, as presented in \cite{JJ}, which utilizes \textit{infimal convolutions} (see Section \ref{infconrev}).  
Theorem \ref{main2} of this paper offers a new proof to the equivalence of potential theoretic weak supersolutions and viscosity supersolutions to the $\tp(\cdot)$-Laplace equation, different than \cite{JLP} and in the spirit of \cite{JJ}. 

The setup is as follows. Let $\Omega \subset \mathbb{R}^n$ be an open and bounded set and $1<\tp(\cdot)<\infty$ where $\tp(\cdot) \in \textnormal{C}^1(\Omega)$.
The $\tp(\cdot)$-Laplacian is defined as
\begin{eqnarray}\label{p(x)laplace1}
-\Delta_{\tp(\cdot)}u(x) := -\div(|D u(x)|^{\tp(\cdot)-2}D u(x))=0, 
\end{eqnarray}
where $x$ is a point in $\Omega$, $u: \Omega \to \mathbb{R}$ possesses appropriate regularity, and $D u$ denotes the gradient of $u$.  Note that Equation \eqref{p(x)laplace1} is the Euler-Lagrange equation of the functional
\begin{eqnarray*}
u \mapsto \int_{\Omega} \frac{1}{\tp(\cdot)} |D u(x)|^{\tp(\cdot)} dx.
\end{eqnarray*}
By taking $\tp(x) \equiv \tp \in (1,\infty)$ we recover the constant exponent operator which is defined by
\begin{eqnarray*}
-\Delta_{\tp} u(x):=-\div(|D u(x)|^{\tp-2}D u(x)) 
\end{eqnarray*}
and which is the Euler-Lagrange equation of the functional
\begin{eqnarray*}
u \mapsto \frac{1}{\tp} \int_{\Omega}  |D u(x)|^{\tp} dx. %,
\end{eqnarray*}
There are two obvious differences in the variable exponent case which are worth bringing to the reader's attention: 
First is that as the exponent involves a function at the point $x \in \Omega$, the behavior of the integrand is necessarily more complicated than the standard, fixed exponent case; and the other is that the fraction $1/\tp(\cdot)$ naturally appears in the integrand and cannot be factored out in the variable exponent case. This presents some difficulty in comparison to the constant exponent case, especially since differentiation will force the use of logarithms.

Since the $\tp(\cdot)$-Laplace equation is degenerate when $\tp(\cdot) > 2$ and singular for $1<\tp(\cdot)<2$, then in order to guarantee that the Dirichlet boundary value problem is solvable, typical approaches involve utilizing the distributuional weak solutions. 

We will prove Theorem \ref{main2} (see p. 4) by showing that viscosity supersolutions to the $\tp(\cdot)$-Laplace equation are $\tp(\cdot)$-superharmonic functions; a simple application of the comparison principle for weak solutions implies that $\tp(\cdot)$-superharmonic functions are viscosity supersolutions to the $\tp(\cdot)$-Laplace equation. 
(See Section \ref{solutions} for definitions.) 

An analogous argument shows that weak subsolutions and viscosity subsolutions are of the same class of solution and thus we have the equivalence of potential theoretic weak solutions and viscosity solutions to the $\tp(\cdot)$-Laplace equation.

Key to the proof are the properties of the infimal convolutions: By their definition and Aleksandrov's Theorem for convex functions, we are able to approximate semicontinuous functions by functions which are $\textnormal{C}^2$ a.e. in $\Omega$. This permits us to avoid certain estimates necessary under the methods of \cite{JLP} which, in turn, result in inconvenient restrictions in some nonstandard spaces. (See, for instance, \cite{BF}.)

As in \cite{JJ}, we begin with the identity
\begin{eqnarray}\label{IBPstartup}
\int_{\Omega}\big(-\Delta_{\tp(\cdot)}u(x)\big) \psi dx = \int_{\Omega}|Du|^{\tp(\cdot)-2}Du \cdot D\psi dx,
\end{eqnarray}
which is true provided $u$ and $\psi$ are of sufficient regularity and $\psi$ is compactly supported on $\Omega$. The proof is then split into the equation's degenerate and singular cases. 

In the degenerate case, we use the infimal convolution mentioned above to produce an increasing sequence $(u_\varepsilon)_{\varepsilon>0}$ of semiconcave functions satisfying 
$-\Delta_{\tp(\cdot)}u_\varepsilon(\cdot) \geq 0$ in a useful subset $\Omega_{r(\varepsilon)}$ of $\Omega$ (defined in Section \ref{infconrev}).  
Approximations utilizing the properties of $u_\varepsilon$ allow us to conclude
\begin{eqnarray}\label{NTSoverall}
\int_{\Omega} |D u_\varepsilon|^{\tp(\cdot)-2} \ip{D u_\varepsilon}{D \psi} dx \geq  \int_{\Omega} \left(-\Delta_{\tp(\cdot)}u_\varepsilon(x)\right) \psi dx \geq 0;
\end{eqnarray}
passing this inequality to the limit as $\varepsilon \to 0$ proves that $u$ is $\tp(\cdot)$-superharmonic. 

The singular case is similar but requires care since it is not clear what happens when $Du = 0$ in the \textit{nondivergence} form (see Section \ref{solutions}). This obstacle forces us to regularize Equation \eqref{IBPstartup} in order to perform a similar argument and calculations as in the singular case. (See Case 2 in the proof of Theorem \ref{main2}.) The result of these cases is Theorem \ref{main2}:

\begin{thm}\label{main2}
Weak supersolutions and viscosity supersolutions to the $\textnormal{\tp}(\cdot)$-Laplace equation coincide.
\end{thm}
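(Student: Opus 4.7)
The plan is to prove the equivalence by establishing the nontrivial implication — that every viscosity supersolution of $-\Delta_{\tp(\cdot)} u = 0$ is $\tp(\cdot)$-superharmonic — since the converse follows from the comparison principle for weak solutions as sketched in the introduction. In the spirit of \cite{JJ}, I would regularize a viscosity supersolution $u$ by its infimal convolution $u_\varepsilon$, which on a shrunken subdomain $\Omega_{r(\varepsilon)} \subset \Omega$ is locally Lipschitz, semiconcave, and twice differentiable almost everywhere by Aleksandrov's Theorem. Standard viscosity-solution machinery transfers the supersolution property from $u$ to $u_\varepsilon$ on $\Omega_{r(\varepsilon)}$, so the pointwise inequality $-\Delta_{\tp(\cdot)} u_\varepsilon(x) \geq 0$ holds almost everywhere in $\Omega_{r(\varepsilon)}$. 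It remains to upgrade this pointwise bound to the integrated inequality \eqref{NTSoverall} and then pass $\varepsilon \to 0$.

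In the degenerate regime $\tp(\cdot) \geq 2$, the vector field $|Du_\varepsilon|^{\tp(\cdot)-2} Du_\varepsilon$ is continuous and vanishes where $Du_\varepsilon = 0$. I would multiply the pointwise inequality by a nonnegative test function $\psi \in \ci(\Omega_{r(\varepsilon)})$, integrate over $\Omega_{r(\varepsilon)}$, and justify the divergence-theorem identity \eqref{IBPstartup} using the semiconcavity of $u_\varepsilon$: its distributional Hessian decomposes into a classical part (equal to the pointwise Hessian a.e.\ by Aleksandrov) plus a negative semidefinite singular measure, and the latter contributes with the favorable sign and can be discarded without reversing the inequality. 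This yields \eqref{NTSoverall} for $u_\varepsilon$, and then monotone convergence $u_\varepsilon \uparrow u$, almost-everywhere convergence of $Du_\varepsilon$ to $Du$, and the continuity of $\tp(\cdot)$ combine to pass to the limit and give the weak supersolution inequality for $u$.

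The singular regime $1 < \tp(\cdot) < 2$ requires more care because $|Du_\varepsilon|^{\tp(\cdot)-2}$ is unbounded as $|Du_\varepsilon| \to 0$, so the integrand in \eqref{IBPstartup} need not even be defined at critical points. I would replace $|Du_\varepsilon|^{\tp(\cdot)-2}$ by $(|Du_\varepsilon|^2 + \delta)^{(\tp(\cdot)-2)/2}$, run the preceding integration-by-parts argument against this smooth $\delta$-regularized operator, and send $\delta \to 0$ by dominated convergence before sending $\varepsilon \to 0$. The principal obstacle — and the step that departs most sharply from \cite{JJ} — is that the nondivergence expansion of $-\Delta_{\tp(\cdot)} u_\varepsilon$ contains a lower-order term of the form $|Du_\varepsilon|^{\tp(\cdot)-2} \log|Du_\varepsilon| \, \ip{D\tp(\cdot)}{Du_\varepsilon}$ with no constant-exponent analogue, which is potentially singular where $|Du_\varepsilon|$ is small. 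Controlling this logarithmic term uniformly in $\varepsilon$ and $\delta$, using the $\textnormal{C}^1$ regularity of $\tp(\cdot)$ together with the local Lipschitz bound on $u_\varepsilon$ on $\Omega_{r(\varepsilon)}$, should be the technical heart of the proof, and it will dictate how the shrinkage $r(\varepsilon)$ must depend on $\varepsilon$.
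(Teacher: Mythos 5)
Your plan for the degenerate case is essentially the paper's Theorem~\ref{main2case1}: infimal convolution with $\tq=2$, Aleksandrov's theorem to get $-\Delta_{\tp(\cdot)}u_\varepsilon\geq 0$ a.e., and an integration-by-parts argument exploiting semiconcavity. The paper implements the last step slightly differently from you --- it mollifies the concave part $\varphi=u_\varepsilon-C|x|^2$ into smooth concave $\varphi_j$, sets $u_{\varepsilon,j}:=\varphi_j+\tfrac{1}{2\varepsilon}|x|^2$, integrates by parts for the genuinely smooth $u_{\varepsilon,j}$, and passes $j\to\infty$ via Fatou's lemma after the uniform lower bound in Claim~\ref{lapvarepsj} --- but your justification (the singular part of the distributional Hessian of a semiconcave function is a nonpositive measure, hence contributes with the correct sign) is the same mechanism phrased distributionally, and either route should work.

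In the singular case you have a genuine gap: you misidentify the obstacle. The logarithmic term $|Du_\varepsilon|^{\tp(\cdot)-2}\log|Du_\varepsilon|\,\ip{D\tp(\cdot)}{Du_\varepsilon}$ is in fact comparable to $|Du_\varepsilon|^{\tp(\cdot)-1}\log|Du_\varepsilon|\cdot|D\tp(\cdot)|$, and since $\tp(\cdot)-1>0$ we have $s^{\tp(\cdot)-1}\log s\to 0$ as $s\to 0^+$; together with the local Lipschitz bound on $u_\varepsilon$ and the $\textnormal{C}^1$ bound on $\tp(\cdot)$, that term is already bounded and harmless. The real difficulty is in the second-order terms $|Du_\varepsilon|^{\tp(\cdot)-2}\trace(D^2u_\varepsilon)$ and $|Du_\varepsilon|^{\tp(\cdot)-4}\ip{D^2u_\varepsilon\,Du_\varepsilon}{Du_\varepsilon}$: the crude semiconcavity bound $D^2u_\varepsilon\leq C\varepsilon^{-1}I_n$ is not strong enough, because $|Du_\varepsilon|^{\tp(\cdot)-2}$ blows up near critical points of $u_\varepsilon$. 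What saves the argument --- and what you are missing --- is the refined gradient-dependent Hessian estimate $D^2u_\varepsilon(x)\leq\frac{\tq-1}{\varepsilon}\,|Du_\varepsilon(x)|^{(\tq-2)/(\tq-1)}I_n$ (Claim~\ref{D2est}), which follows from Lemma~\ref{lscuscgradeps0} and the structure of the set $Y_\varepsilon$; with it the troublesome terms scale like $|Du_\varepsilon|^{\tp(\cdot)-1-1/(\tq-1)}$, and this exponent is positive (so the quantity is bounded, vanishing at critical points) precisely when $\tq>\tp(\cdot)/(\tp(\cdot)-1)$. That constraint, not the log term, is what dictates the exponent $\tq$ in \eqref{infconv2}; the shrinkage $r(\varepsilon)$ is fixed independently in Lemma~\ref{infconvprop1}(i) just to localize the infimum. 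Two smaller corrections: you should pass $\delta\to 0$ with Fatou's lemma against the uniform lower bound coming from Claim~\ref{lapest} and \eqref{simplest}, not dominated convergence, since no integrable majorant is produced; and both the sign at critical points ($D^2u_\varepsilon\leq 0$ where $Du_\varepsilon=0$, Equation~\eqref{hessiangeq}) and the restriction of the Fatou integral to $\Omega\setminus\{Du_\varepsilon=0\}$ are needed to close the estimate.
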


\noindent
By duplicating the arguments for Theorem \ref{main2} for $\tp(\cdot)$-weak subharmonic and viscosity $\tp(\cdot)$-subsolutions, the Main Theorem follows as a corollary:

\begin{main}
Weak solutions and viscosity solutions to the $\textnormal{\tp}(\cdot)$-Laplace equation coincide.
\end{main} 
It should be noted that a recent article by Medina and Ochoa has appeared on the math archive which shows the new proof of the equivalence of potential theoretic weak supersolutions and viscosity supersolutions to the nonhomogeneous case of the $\textnormal{\tp}(\cdot)$-Laplace equation.  Their proof uses infimal convolutions and is similar to the homogeneous case presented in this paper; however, the authors of this paper have derived this work independently with no prior knowledge of the work of Medina and Ochoa. The authors wish to recognize Medina and Ochoa's achievement, and present the contents of the current article as an additional result in the field.

The paper is divided up in the following way:  in Section \ref{soborvw}, we recall some key properties of variable exponent Lebesgue and Sobolev spaces.  We discuss notions of solutions to the $\tp(\cdot)$-Laplace equation and some relevant facts relating some of these notions in Section \ref{solutions}.  In Section \ref{infconrev} we recall some key results involving infimal convolutions and recall Aleksandrov's Theorem. In Section \ref{newproof} we present the new proof of the equivalence of potential theoretic weak supersolutions and viscosity supersolutions to the $\textnormal{\tp}(\cdot)$-Laplace equation (Theorem \ref{main2}).  Then we state the Main Theorem, which is a consequence of Theorem \ref{main2}.

\section{The spaces $L^{\tp(\cdot)}(\Omega)$ and $W^{1,\tp(\cdot)}(\Omega)$}\label{soborvw} 
In this section, we review some key properties of variable exponent Lebesgue spaces and Sobolev spaces employing the variable exponent $\tp(\cdot)$.  (See \cite{CU},\cite{KR} for a more complete discussion.)
We let $\tp: \mathbb{R}^n \rightarrow (1,\infty)$, called a variable exponent, be in $\textnormal{C}^1(\mathbb{R}^n)$ and let $\Omega \subset \mathbb{R}^n$ be a bounded, open set.

Denote the following
$$\tp^+ = \sup_{x \in \Omega} \tp(\cdot) \textmd{\ \ and \ \ } \tp^- = \inf_{x \in \Omega} \tp(\cdot)$$
and assume that
$$1 < \tp^- \le \tp^+ < \infty $$
holds in compact subsets of $\Omega$.
\subsection{Variable Exponent Lebesgue Spaces}
We first define the variable exponent Lebesgue space as in \cite{KR}:  $L^{\tp(\cdot)}(\Omega)$ is the space of measurable functions $u$ on $\Omega \subset \mathbb{R}^n$ such that the modular $\varrho_{\tp(\cdot)}$ satisfies 
$$\varrho_{\tp(\cdot)}(u)\: := \: \int_{\Omega} | u(x)|^{\tp(\cdot)} dx < \infty.$$ 
Moreover, we use the Luxemburg norm:
\begin{equation*} 
\| u \|_{L^{\tp(\cdot)}(\Omega)} : =  \inf \bigg{\{} \; \lambda > 0\; : \; \int_{\Omega} \bigg|\frac{u(x)}{\lambda}\bigg|^{\tp(\cdot)} dx \leq 1 \;\; \bigg{\}}.
\end{equation*}
Note that because $\tp^+<\infty$,  $L^{\tp(\cdot)}(\Omega)$ equipped with this norm is a Banach space.  Also note that  if ${\tp(\cdot)}$ is constant, then $L^{\tp(\cdot)}(\Omega)$ reduces to the standard Lebesgue space $L^\tp(\Omega)$.

The definition of the norm produces the following relationship between the modular and the norm:
\begin{eqnarray}\label{norm2mod}
\min\{{\|u\|_{L^{\tp(\cdot)}(\Omega)}^{\tp^+},\|u\|_{L^{\tp(\cdot)}(\Omega)}^{\tp^-}}\} \; \le \; \varrho_{\tp(\cdot)}(u) \:\:\le\:\:
\max\{{\|u\|_{L^{\tp(\cdot)}(\Omega)}^{\tp^+},\|u\|_{L^{\tp(\cdot)}(\Omega)}^{\tp^-}}\}.
\end{eqnarray}

These inequalities \eqref{norm2mod} directly imply for any sequence $\{u_k\}_{k\in\mathbb{N}}\stackrel{k \rightarrow \infty}{\longrightarrow}u$, we have:
\begin{eqnarray}\label{equivnorm}
\varrho_{\tp(\cdot)}(u-u_k) \rightarrow 0  \:\: \Longleftrightarrow \:\: \|u-u_k\|_{L^{\tp(\cdot)}(\Omega)} \rightarrow 0
\end{eqnarray}
as $k \rightarrow \infty$.

Given functions $u \in L^{\tp(\cdot)}(\Omega)$ and $v \in L^{\texttt{q}(\cdot)}(\Omega)$, where the  conjugate exponent $\texttt{q}(\cdot)$ of $\tp(\cdot)$ is defined pointwise, we have a form of H\"{o}lder's inequality (cf. \cite[Theorem 2.1]{KR}):
\begin{eqnarray}\label{Holder}
\int_\mathcal{O} |u| \: |v| \; dx \leq \: C \: \|u\|_{L^{\tp(\cdot)}(\Omega)} \|v\|_{L^{\texttt{q}(\cdot)}(\Omega)}.
\end{eqnarray}
Additionally, the dual of $L^{\tp(\cdot)}(\Omega)$ is $L^{\texttt{q}(\cdot)}(\Omega)$ and $L^{\tp(\cdot)}(\Omega)$ is reflexive. 

\subsection{Variable Exponent Sobolev Spaces}
We will use the following notation and definition for the variable exponent  Sobolev space $W^{1,\tp(\cdot)}(\mathbb{R}^n)$ for $1 < \tp^+ < \infty$:

\begin{eqnarray*}
W^{1,\tp(\cdot)}(\mathbb{R}^n) =  \bigg{\{f} \in L^{\tp(\cdot)}(\mathbb{R}^n), |D f| \in L^{\tp(\cdot)}(\mathbb{R}^n)\; : \; \int_{\mathbb{R}^n} |f(x)|^{\tp(\cdot)} + |D f(x)|^{\tp(\cdot)} dx < \infty \bigg{\}},
\end{eqnarray*}
where we use the norm
\begin{equation*}
\| f \|_{W^{1,\tp(\cdot)}(\mathbb{R}^n)}  = \| f \|_{L^{\tp(\cdot)}(\mathbb{R}^n)}  + \| D f\|_{L^{\tp(\cdot)}(\mathbb{R}^n)},
\end{equation*}
which makes $W^{1,\tp(\cdot)}(\mathbb{R}^n)$ a Banach space (\cite[Theorem 3.4]{HHP}).  
Similarly, we define the variable exponent  Sobolev space $W^{1,\tp(\cdot)}(\Omega)$ for $1<\tp^+ < \infty$ in the natural way, where we let $\Omega \subset \mathbb{R}^n$ be an open and connected set in $\mathbb{R}^n$.  

Replacing $ L^{\tp(\cdot)}(\Omega)$ by $L_{\textnormal{loc}}^{\tp(\cdot)}(\Omega)$, we define the space $ W_{\textnormal{loc}}^{1,{\tp(\cdot)}}(\Omega)$, which consists of functions $f$
that belong to $ W_{\textnormal{loc}}^{1,{\tp(\cdot)}}(\Omega')$ for all open sets 
$\Omega' \Subset \Omega$, in the natural way.  The space $W_{0}^{1,{\tp(\cdot)}}(\Omega)$ is the closure in $W^{1,{\tp(\cdot)}}(\Omega)$ of smooth functions with compact support. Additionally, it is a Banach space and it is reflexive.

We note that the density of smooth functions in $W^{1,\tp(\cdot)}(\Omega)$ is typically a nontrivial matter; %but is not an issue since we are 
by assuming that $\tp(\cdot)$ is a $\textnormal{C}^1$ function, we avoid the issue. Indeed, since $\tp(\cdot) < \infty$ we have that continuous functions with compact support are dense in $L^{\tp(\cdot)}(\Omega)$. (See, for example, \cite[Theorem 3.3]{HHP2}.) In addition, we have $\textnormal{C}^{\infty}_0(\Omega)$ is dense in $W^{1,\tp(\cdot)}(\Omega)$ \cite[Theorem 3]{S}.
This density allows us to pass from smooth test functions to Sobolev test functions in the definition of weak solutions given in Section \ref{solutions}.

\section{Notions of Solutions to the $\tp(\cdot)$-Laplace Equation}\label{solutions}
In this section, we will discuss various notions of solutions to the $\tp(\cdot)$-Laplace equation, along with some key results we will need to prove Theorem \ref{main2}.  
Recall the $\tp(\cdot)$-Laplace Equation introduced in Section \ref{intro}:
\begin{eqnarray}\label{p(x)laplace}
-\Delta_{\tp(\cdot)}u(x) := -\div(|D u(x)|^{\tp(\cdot)-2}D u(x)) = 0.
\end{eqnarray}
We may also formally compute the divergence to obtain the nondivergence form of the $\tp(\cdot)$-Laplace operator:   
\begin{eqnarray}\label{nondivergent}
-\Delta_{{\tp(\cdot)}}u & = & -\div(| D u |^{\tp(\cdot)-2}D u)   \nonumber
\\ & = &- | D u |^{\tp(\cdot)-2} \trace \left(D^2 u\right) -(\tp(\cdot)-2) | D u |^{\tp(\cdot)-4} \ip{D^2u D u}{D u} \nonumber
\\ & & \mbox{}- \log(| D u |)| D u |^{\tp(\cdot)-2} \ip{ D \tp(\cdot)}{D u} \nonumber
\\ & = & -|D u|^{\tp(\cdot) -2}\big(\Delta u + (\tp(\cdot) -2)  \ip{D^2u \frac{D u}{|D u|}}{\frac{D u}{|D u|}} \big) - |D u|^{\tp(\cdot) -2} \log|D u| \ip{D \tp(\cdot)}{D u}
\\ & = & -|D u|^{\tp(\cdot) -2}\big(\Delta u + (\tp(\cdot) -2) \Delta_{\infty} u \big) - |D u|^{\tp(\cdot) -2} \log|D u| \ip{D \tp(\cdot)}{D u}, \nonumber
\end{eqnarray}
where $D^2u$ is the Hessian matrix, $\Delta u$ is the usual Laplacian (which is the trace of $D^2u$), and $\Delta_{\infty} u := \ip{D^2u \frac{D u}{|D u|}}{\frac{D u}{|D u|}}$ is the normalized $\infty$-Laplacian.

We first define the notion of weak solutions to Equation \eqref{p(x)laplace}.
\begin{definition}
The function $u \in W_{\textnormal{loc}}^{1,\tp(\cdot)}({\Omega})$ is a \emph{weak supersolution} to Equation \eqref{p(x)laplace} if 
$$\int_\Omega | D u |^{\tp(\cdot)-2} \ip{D u}{D \phi} dx \;\; \geq \;  0$$
for all nonnegative test functions $\phi \in C_0^{\infty}({\Omega})$.  
The function $u \in W_{\textnormal{loc}}^{1,\tp(\cdot)}({\Omega})$ is a \emph{weak subsolution} to Equation \eqref{p(x)laplace} if $-u$ is a weak supersolution to Equation \eqref{p(x)laplace}. 
The function $u \in W_{\textnormal{loc}}^{1,\tp(\cdot)}({\Omega})$ is a \emph{weak solution} to Equation \eqref{p(x)laplace} if it is both a weak supersolution and a weak subsolution.
\end{definition}

\begin{remark}\label{testfunc}
If $u \in W^{1,\textnormal{\tp}(\cdot)}({\Omega})$, we may use test functions in $W_0^{1,\textnormal{\tp}(\cdot)}({\Omega})$ via standard approximation arguments.
Moreover, by regularity theory, we can identify weak solutions with their locally H\"{o}lder continuous representative. (See \cite{AM}, \cite{Yu} and \cite{FZ}.)
\end{remark}

We next consider the following class of $\tp(\cdot)$-superharmonic functions and $\tp(\cdot)$-subharmonic functions.
\begin{definition}\label{superharm}
The function $u: \Omega \to \mathbb{R} \cup \{\infty\}$ is \emph{$\textnormal{\tp}(\cdot)$-superharmonic} if 
\begin{enumerate}
\item[(\ref{superharm}$\textnormal{a}$)] $u$ is lower semicontinuous,
\item[(\ref{superharm}$\textnormal{b}$)] $u$ is finite almost everywhere, and
\item[(\ref{superharm}$\textnormal{c}$)] the comparison principle holds: If $v$ is a weak solution to Equation \eqref{p(x)laplace} in some domain $D \Subset \Omega$, $v$ is continuous in $\overline{D}$, and $u \geq v$ on $\partial D$, then $u\geq v$ in $D$.
\end{enumerate} 
The function $u: \Omega \to \mathbb{R} \cup \{-\infty\}$ is \emph{$\textnormal{\tp}(\cdot)$-subharmonic} if $-u$ is $\tp(\cdot)$-superharmonic. 
The function $u: \Omega \to \mathbb{R} \cup \{\infty\}$ is \emph{$\textnormal{\tp}(\cdot)$-harmonic} if it is both  $\tp(\cdot)$-subharmonic and  $\tp(\cdot)$-superharmonic.
\end{definition}

\begin{remark}
The second condition in the definition of $\textnormal{\tp}(\cdot)$-superharmonic (respectively $\textnormal{\tp}(\cdot)$-subharmonic) requires that $u$ is finite almost everywhere, whereas the classic definition usually states that $u$ is not identically $+\infty$ $(-\infty)$ in each component of $\Omega$.  The stronger condition stated here is needed for the characterization of $\textnormal{\tp}(\cdot)$-superharmonic (subharmonic) functions as pointwise increasing limits of supersolutions (subsolutions) to Equation \eqref{p(x)laplace}.  For a more complete discussion, see \cite[Section 6]{HHKLM}.
\end{remark}

We now review some key facts in the variable exponent case necessary to show the equivalence of potential theoretic weak solutions and $\tp(\cdot)$-harmonic functions.  We will use the lower semicontinuous regularization given by
\begin{eqnarray}\label{lscreg}
u_{\star}(x) := \ess \lim \inf_{y \to x}u(y) = \lim_{r \to 0} \ess \inf_{B(x,r)} u.
\end{eqnarray}
The next two theorems imply that a weak solution to Equation \eqref{p(x)laplace} is equivalent to also being a $\tp(\cdot)$-harmonic solution, which we state as Corollary \ref{weakharm} for reference.
Theorem \ref{weaksuperh} follows from \cite[Theorem 6.1]{HHKLM} and \cite[Theorem 4.1]{HKL} and Theorem \ref{superhweak} follows from \cite[Corollary 6.6]{HHKLM}.

\begin{thm}\label{weaksuperh}
Let $1<\textnormal{\tp}(\cdot)<\infty$ and assume $u$ is a  weak supersolution in $\Omega$.  Then $u=u_{\star}$ a.e. and $u_{\star}$ is $\textnormal{\tp}(\cdot)$-superharmonic.
\end{thm}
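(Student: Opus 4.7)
The plan is to verify, one by one, the three conditions of Definition \ref{superharm} for the regularization $u_\star$, together with the almost-everywhere equality $u = u_\star$. The lower semicontinuity requirement is automatic, since the essential liminf in \eqref{lscreg} is lower semicontinuous by construction. The finiteness-a.e.\ requirement will follow once we know $u = u_\star$ a.e., because a weak supersolution belongs to $\wql(\Omega)$ and is therefore finite a.e. So the two substantive tasks are (i) proving $u = u_\star$ a.e., and (ii) verifying the comparison principle for $u_\star$.

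For task (i), I would appeal to the interior regularity theory for weak supersolutions to the $\tp(\cdot)$-Laplace equation. Testing the supersolution inequality with truncations $\phi = (k - u)_+\eta^{\tp^+}$ against a smooth cutoff $\eta$ yields a Caccioppoli estimate; iterating this estimate by the De~Giorgi scheme produces a weak Harnack-type bound which, as in \cite[Theorem 4.1]{HKL}, implies $\ess\inf_{B(x,r)} u \to u(x)$ as $r \to 0$ at every Lebesgue point of $u$. This is precisely $u(x) = u_\star(x)$, and since Lebesgue points of a locally integrable function form a set of full measure, the a.e.\ equality follows.

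For task (ii), let $v$ be a weak solution in $D \Subset \Omega$, continuous on $\overline{D}$, with $u_\star \geq v$ on $\p D$. Fix $\varepsilon > 0$; the lower semicontinuity of $u_\star$ and the continuity of $v$ force $G_\varepsilon := \{x \in D : v(x) > u_\star(x) + \varepsilon\}$ to be open and compactly contained in $D$. Using $\phi := (v - u_\star - \varepsilon)_+$ as a test function (via the density of smooth functions noted in Section \ref{soborvw}), I would subtract the supersolution inequality for $u$ from the subsolution inequality for $v$ to obtain
\[
\int_{G_\varepsilon} \ip{|Dv|^{\tp(\cdot)-2}Dv - |Du|^{\tp(\cdot)-2}Du}{D(v - u)}\, dx \leq 0.
\]
Since $\xi \mapsto |\xi|^{\tp(\cdot)-2}\xi$ is strictly monotone, the integrand is pointwise nonnegative and must therefore vanish a.e.\ on $G_\varepsilon$. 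This forces $D\phi \equiv 0$ a.e., and since $\phi$ vanishes continuously on $\p G_\varepsilon$ the Poincar\'e inequality gives $\phi \equiv 0$; that is, $v \leq u_\star + \varepsilon$ a.e.\ in $D$. Sending $\varepsilon \to 0$ gives $v \leq u_\star$ a.e., and since $v$ is continuous while $u_\star$ is itself an essential liminf, this inequality propagates to every point of $D$.

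The main obstacle is task (i). In the constant-exponent setting this follows from classical De~Giorgi/Moser theory, but for variable exponents the Caccioppoli estimates pick up logarithmic correction terms reflecting the derivatives of $\tp(\cdot)$ — the same phenomenon visible in the nondivergence form \eqref{nondivergent} — and the non-homogeneity of $|\xi|^{\tp(\cdot)-2}\xi$ under scaling complicates the iteration. These issues are precisely why the $\textnormal{C}^1$ hypothesis on $\tp(\cdot)$ is invoked. By contrast, the monotonicity argument in task (ii) is a direct adaptation of the constant-exponent comparison principle and introduces no essentially new analytic difficulty beyond careful bookkeeping of the exponent.
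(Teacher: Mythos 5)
The paper does not itself prove Theorem \ref{weaksuperh}: immediately after stating it, the text remarks only that the result follows from \cite[Theorem 6.1]{HHKLM} and \cite[Theorem 4.1]{HKL}, so there is no internal proof to compare your sketch against. That said, the two-part decomposition you propose --- first identify $u$ with its regularization $u_\star$ from Equation \eqref{lscreg} via a weak Harnack inequality for supersolutions, then verify condition (\ref{superharm}c) by testing the sub- and supersolution inequalities against a truncated difference and invoking monotonicity of $\xi \mapsto |\xi|^{\tp(\cdot)-2}\xi$ --- is exactly the structure underlying those two citations, with \cite{HKL} supplying step (i) and \cite{HHKLM} supplying step (ii). You also correctly isolate the genuinely delicate part as step (i), where the variable exponent contributes the logarithmic terms that complicate the Caccioppoli and iteration estimates.

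Two small repairs. First, $G_\varepsilon = \{x \in D : v(x) > u_\star(x) + \varepsilon\}$ is not open: $v - u_\star$ is \emph{upper} semicontinuous (continuous minus lower semicontinuous), and strict superlevel sets of an upper semicontinuous function need not be open. What you actually get, from the boundary hypothesis $u_\star \geq v$ on $\p D$ together with $v$ continuous and $u_\star$ lower semicontinuous, is that the closed set $\{v - u_\star \geq \varepsilon\}$ is compactly contained in $D$; since $\overline{G_\varepsilon} \subseteq \{v - u_\star \geq \varepsilon\}$, the test function $\phi := (v - u_\star - \varepsilon)_+$ still has compact support in $D$ and lies in $W_0^{1,\tp(\cdot)}(D)$, which is all the monotonicity-and-Poincar\'e argument requires, so the conclusion stands. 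Second, the statement that $\ess\inf_{B(x,r)} u \to u(x)$ at every Lebesgue point is not a purely measure-theoretic fact: at a Lebesgue point one controls $\dashint_{B(x,r)} u$, but the essential infimum could a priori sit far below the average. The weak Harnack inequality is precisely what closes this gap, by bounding $\ess\inf_{B(x,r)} u$ from \emph{below} by an integral functional of $u$ on a concentric larger ball; you do route through that inequality, but the phrasing reads as though the Lebesgue differentiation theorem alone delivered the pointwise convergence, which it does not.
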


\begin{thm}\label{superhweak}
Let $1<\textnormal{\tp}(\cdot)<\infty$ and assume $u$ is a locally bounded $\textnormal{\tp}(\cdot)$-superharmonic function.  Then $u$ is a weak supersolution.
\end{thm}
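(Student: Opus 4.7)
The plan is to show that on any smooth subdomain $D \Subset \Omega$, the locally bounded $\tp(\cdot)$-superharmonic function $u$ coincides with a weak supersolution. Since $u$ is lower semicontinuous and bounded on $\overline{D}$, I would first select an increasing sequence of smooth functions $\psi_j$ with $\psi_j \nearrow u$ on $\overline{D}$. For each $j$, solve the Dirichlet problem for $-\Delta_{\tp(\cdot)} h_j = 0$ in $D$ with boundary data $\psi_j$, via minimization of the convex functional $w \mapsto \int_D \tfrac{1}{\tp(\cdot)}|D w|^{\tp(\cdot)}\,dx$ on the admissible class $\psi_j + W_0^{1,\tp(\cdot)}(D)$. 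Each $h_j$ is a weak solution, continuous on $\overline{D}$ by the regularity mentioned in Remark \ref{testfunc}. Condition (\ref{superharm}$\textnormal{c}$) applied with $v = h_j$ and $u \geq \psi_j = h_j$ on $\partial D$ yields $h_j \leq u$ in $D$, while the usual comparison principle among weak solutions makes the sequence $(h_j)$ nondecreasing.

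Next I would pass to the limit. Since $\|h_j\|_{L^{\infty}(D)} \leq \|u\|_{L^{\infty}(D)}$, Caccioppoli-type estimates for weak solutions of the $\tp(\cdot)$-Laplacian give uniform bounds on $\|D h_j\|_{L^{\tp(\cdot)}(D')}$ for every $D' \Subset D$. By reflexivity of $W^{1,\tp(\cdot)}$ and the norm-modular equivalence \eqref{equivnorm}, extract a subsequence with $h_j \rightharpoonup h$ in $W^{1,\tp(\cdot)}_{\textnormal{loc}}(D)$; monotonicity also gives $h_j \to h$ a.e. Minty's trick, applied to the monotone vector field $\xi \mapsto |\xi|^{\tp(\cdot)-2}\xi$, identifies $h$ as a weak solution in $D$, and in particular $h \in W^{1,\tp(\cdot)}_{\textnormal{loc}}(D)$ with $h \leq u$.

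Finally I would identify $u$ with $h$. Viewing $h$ as the output of a Perron-type obstacle construction (the greatest weak subsolution in $D$ dominated by $u$ from this family), one checks that on the open set $\{h < u_{\star}\}$ the function $h$ is a continuous weak solution; applying condition (\ref{superharm}$\textnormal{c}$) on each component of this set forces it to be empty, so $h = u_{\star} = u$ a.e.\ in $D$. Combined with Theorem \ref{weaksuperh}, which guarantees that weak supersolutions equal their lower semicontinuous regularization a.e., and with the lower semicontinuity of $u$, this gives that $u$ is itself a weak supersolution. The main obstacle I expect is precisely this identification step: because of the logarithmic contribution $\log|D u| \ip{D\tp(\cdot)}{D u}$ visible in the nondivergence expansion \eqref{nondivergent} and the absence of multiplicative scaling in \eqref{norm2mod}, the monotone passage to the limit and the comparison with $u$ must proceed through Minty's trick rather than any $L^{\tp}$ duality shortcut available in the fixed exponent case.
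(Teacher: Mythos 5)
The paper does not give its own proof of this theorem; it is cited directly as \cite[Corollary 6.6]{HHKLM}, so there is no in-paper argument to compare against. That said, your proposal has a real gap at the identification step.

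The first two stages are fine: monotone exhaustion $\psi_j \nearrow u$, solving Dirichlet problems with boundary data $\psi_j$, comparison to get $h_j \leq u$, uniform local bounds, Minty's trick for the monotone operator $\xi \mapsto |\xi|^{\tp(\cdot)-2}\xi$, leading to a $\tp(\cdot)$-harmonic limit $h \leq u$ in $D$. The gap is the claim $h = u_\star = u$ a.e. This is simply false in general. Take the constant-exponent model with $\tp \equiv 2$ and $u$ the fundamental solution restricted to a punctured ball and truncated to be bounded: $u$ is superharmonic and locally bounded, but its Poisson modification on any ball is strictly below $u$ in the interior. Your proposed rescue -- "applying condition (\ref{superharm}$\textnormal{c}$) on each component of $\{h < u_\star\}$ forces it to be empty" -- does not yield a contradiction: condition (\ref{superharm}$\textnormal{c}$) applied to $v = h$ gives only $u \geq h$, which is already known and perfectly consistent with $h < u$ on a nonempty open set. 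There is nothing to force the set to be empty; the Dirichlet-problem limit $h$ is the $\tp(\cdot)$-harmonic function with boundary data $u$, not $u$ itself, unless $u$ is $\tp(\cdot)$-harmonic.

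The step you are missing is that the correct auxiliary problem is an \emph{obstacle} problem, not a Dirichlet problem. With continuous obstacles $\psi_j \nearrow u$ on $D \Subset \Omega$ (and boundary data $\psi_j$), the obstacle-problem solutions $s_j$ are weak supersolutions in $D$, are continuous, and crucially satisfy $s_j \geq \psi_j$ \emph{everywhere in $D$}, not just on $\partial D$. Comparison from the definition of $\tp(\cdot)$-superharmonicity (applied in the open set $\{s_j > \psi_j\}$, where $s_j$ is $\tp(\cdot)$-harmonic) yields $s_j \leq u$. Then $\psi_j \leq s_j \leq u$ and $\psi_j \nearrow u$ force $s_j \nearrow u$ pointwise, and the increasing-limit-of-supersolutions machinery (with the uniform local boundedness hypothesis) shows $u$ is a weak supersolution in $D$, hence in $\Omega$. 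This is exactly the content of \cite[Section 6]{HHKLM} and the reason the lower bound must be an obstacle rather than a boundary condition. Your remark about Minty versus $L^{\tp}$-duality is correct but orthogonal to the actual difficulty.
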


\begin{corollary}\label{weakharm}
Weak supersolutions and $\textnormal{\tp}(\cdot)$-superharmonic solutions to Equation \eqref{p(x)laplace} coincide.
\end{corollary}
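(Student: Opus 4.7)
The plan is straightforward: the corollary is the direct combination of Theorems \ref{weaksuperh} and \ref{superhweak}, which together yield both containments. No new ingredients are needed beyond unpacking the two implications; the entire analytic content has already been absorbed into those two theorems, so my task is simply to record each direction cleanly and reconcile the slight mismatch in hypotheses.

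First I would treat the direction \emph{weak supersolution $\Rightarrow$ $\tp(\cdot)$-superharmonic}. Given a weak supersolution $u \in W^{1,\tp(\cdot)}_{\textnormal{loc}}(\Omega)$, Theorem \ref{weaksuperh} gives $u = u_{\star}$ almost everywhere, with $u_{\star}$ defined by the essential liminf in \eqref{lscreg}, and identifies $u_{\star}$ as $\tp(\cdot)$-superharmonic in the sense of Definition \ref{superharm}. By Remark \ref{testfunc}, $u$ may be identified with its locally Hölder continuous (hence lower semicontinuous) representative, so after this identification $u$ itself satisfies properties (\ref{superharm}a)--(\ref{superharm}c). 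For the reverse direction \emph{$\tp(\cdot)$-superharmonic $\Rightarrow$ weak supersolution}, I would take a locally bounded $\tp(\cdot)$-superharmonic function $u$ and apply Theorem \ref{superhweak} directly, concluding that $u \in W^{1,\tp(\cdot)}_{\textnormal{loc}}(\Omega)$ and is a weak supersolution.

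The only subtle point to address is the mild mismatch between the two classes: a general $\tp(\cdot)$-superharmonic function may take the value $+\infty$ on a small set and thus need not a priori belong to $W^{1,\tp(\cdot)}_{\textnormal{loc}}(\Omega)$. I would handle this in the standard way by restricting the equivalence to the locally bounded setting (on which it is literal), or equivalently by invoking the characterization of $\tp(\cdot)$-superharmonic functions as pointwise increasing limits of bounded weak supersolutions via the truncations $\min(u,k)$, as referenced in the remark following Definition \ref{superharm} and in \cite[Section 6]{HHKLM}. Because the heavy lifting sits inside the two cited theorems, I do not expect a genuine obstacle here; the work is essentially bookkeeping.
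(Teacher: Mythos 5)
Your proposal is correct and follows exactly the route the paper intends: the corollary is stated as the immediate combination of Theorems \ref{weaksuperh} and \ref{superhweak}, with the weak supersolution identified with its lower semicontinuous representative $u_{\star}$, and the paper offers no further proof beyond that pairing. Your observation about the needed local boundedness (or truncations $\min(u,k)$) in the converse direction is a legitimate and standard clarification of what ``coincide'' must mean here, and is consistent with the paper's own use of truncation in the proof of Theorem \ref{main2case1}.
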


Recall the $\tp(\cdot)$-Laplace equation in non-divergence form:
\begin{equation}\label{non}
    \begin{aligned}
        -\bigg( \: \|D u\|^{\tp(\cdot)-2} \trace(D^2u)  +  (\tp(\cdot)-2) \|D u\|^{\tp(\cdot)-4}\ip{(D^2u)D u}{D u}& \\
        +\|D u\|^{\tp(\cdot)-2}\log(\|D u\|)\ip{D \tp(\cdot)}{D u }\bigg) &= 0
    \end{aligned}
%\lefteqn{-\bigg( \: \|D u\|^{\tp(\cdot)-2} \trace(D^2u)  +  (\tp(\cdot)-2) \|D u\|^{\tp(\cdot)-4}
%\ip{(D^2u)D u}{D u} } && \\
%&&\mbox{} +\|D u\|^{\tp(\cdot)-2}\log(\|D u\|)\ip{D \tp(\cdot)}{D u }\bigg) =  0 \nonumber
\end{equation}
in a bounded domain $\Omega$.
We note the left hand side of Equation \eqref{non} is degenerate elliptic and proper in the sense of \cite{CIL:UGTVS}.
We will also need the following definitions.
Given the function $u$, we consider the following definition for the set of functions $\Upsilon$ that touch $u$ from below at the point $x_0$.  That is, the set $\mathcal{TB}(u,x_0)$ given by  $$\mathcal{TB}(u,x_0)=\{\Upsilon \in \textnormal{C}^2(\Omega) :u(x_0)=\Upsilon(x_0),u(x) > \Upsilon(x)\; \textmd{for} \; x\neq x_0, \; x \; \textnormal{near} \; x_0 \}.$$  Similarly, the set of functions $\Upsilon$ that touch $u$ from above at the point $x_0$ is given by $$\mathcal{TA}(u,x_0)=\{\Upsilon \in \textnormal{C}^2(\Omega) :u(x_0)=\Upsilon(x_0),u(x) < \Upsilon(x)\; \textmd{for} \; x\neq x_0, \; x \; \textnormal{near} \; x_0 \}.$$

We are now able to define the concepts of \emph{viscosity solutions} to Equation \eqref{non}:
\begin{definition}\label{visc}
The function $u:\Omega \to \mathbb{R} \cup \{\infty\}$ is a \emph{viscosity supersolution} to Equation \eqref{non} if the following hold:
\begin{enumerate}
\item[($\ref{visc}\textnormal{a}$)] $u$ is lower semicontinuous,
\item[($\ref{visc}\textnormal{b}$)] $u$ is finite almost everywhere, and
\item[($\ref{visc}\textnormal{c}$)] For $x_0 \in \Omega$,  $\phi \in  \mathcal{TB}(u,x_0)$, with $D \phi(x_0) \ne 0$, satisfies 
$$-\Delta_{\tp(\cdot)} \phi(x_0)  \geq 0.$$
\end{enumerate} 
A function $u$ is a \emph{viscosity subsolution}  to Equation \eqref{non} if
$-u$ is a viscosity supersolution.  
That is, the function $u:\Omega \to \mathbb{R} \cup \{-\infty\}$ is a \emph{viscosity subsolution} to Equation \eqref{non} if the following hold:
\begin{enumerate}
\item[(\ref{visc}$\textnormal{d}$)] $u$ is upper semicontinuous,
\item[(\ref{visc}$\textnormal{e}$)] $u$ is finite almost everywhere, and
\item[(\ref{visc}$\textnormal{f}$)] For $x_0 \in \Omega$,  $\psi \in  \mathcal{TA}(u,x_0)$, with $D  \psi(x_0) \ne 0$, satisfies 
$$-\Delta_{\tp(\cdot)} \psi(x_0)  \leq 0.$$
\end{enumerate} 
A function $u$ is a \emph{viscosity solution} if it is both a viscosity supersolution and a viscosity subsolution.
\end{definition}

\section{Background Results}\label{infconrev}
In this section, we review the definition of infimal convolutions, some useful notations, mention Aleksandrov's Theorem and some key results involving infimal convolutions which we will need in the proof of Theorem \ref{main2}.
\subsection{Infimal convolutions and notations}\label{infimalconvolutions}
As in \cite{JJ}, the main tool we will use to show viscosity supersolutions are weak supersolutions to Equation \eqref{p(x)laplace} is infimal convolutions.  
Recall that $\tp(\cdot) \in \textnormal{C}^1(\Omega)$ and we use the notations:
$$\tp^- := \inf_{x \in \Omega} \tp(\cdot) \;\; \textnormal{and} \;\; \tp^+ := \sup_{x \in \Omega} \tp(\cdot).$$  
We will split the proof of Theorem \ref{main2} in Section \ref{newproof} into two cases:  the degenerate case and the singular case.  Each case will need a different infimal convolution.
\begin{itemize}
\item[Case 1:] \; The infimal convolution we will use for the proof when $2 \leq \tp^- < \infty$ is 
\begin{eqnarray}\label{infconv}
u_{\varepsilon}(x) := \inf_{y \in \Omega} \bigg(u(y) + \frac{|x-y|^{\tq}}{ \tq \varepsilon^{\texttt{q}}}\bigg),
\end{eqnarray}
where $2\leq \tq < \infty$ and $\varepsilon > 0$.  
\item[Case 2:] \; In the case that $1 < \tp^+ < 2$, we will use the infimal convolution
\begin{eqnarray}\label{infconv2}
u_{\varepsilon}(x) := \inf_{y \in \Omega} \bigg(u(y) + \frac{|x-y|^{\tq}}{ \textnormal{\texttt{q}} \varepsilon^{\textnormal{\texttt{q}}-1}}\bigg),
\end{eqnarray}
where $\tq > \frac{\tp^+}{\tp^+ - 1}$, %so
implying $\tq > 2$.
\end{itemize}

Case 1 corresponds to the degenerate case defined in Section \ref{intro} and Case 2 corresponds to the singular case. We will use the following notation and definition of $\Omega_{r(\varepsilon)}$ in both cases: 
$$\Omega_{r(\varepsilon)}:= \{x \in \Omega \; : \; dist(x,\partial\Omega)> r(\varepsilon)\}.$$ 

\begin{definition}\label{Yepsilon}
Let $u: \Omega \to \mathbb{R}$ and $x \in \Omega_{r(\varepsilon)}$.  We define the set $Y_{\varepsilon}$ as in \cite{JJ}.  That is, we have the following condition for membership in the set $Y_{\varepsilon}$: 
\begin{eqnarray}\label{Yeps}
y_0 \in Y_{\varepsilon} \;\;\; \textnormal{if} \;\;\; u_{\varepsilon}(x) = u(y_0) + \frac{1}{\tq \varepsilon^{\tq -1}}|x-y_0|^{\tq},
\end{eqnarray}
or in other words, for some $x \in \Omega_{r(\varepsilon)}$, we have 
$$u(y_0) + \frac{1}{\tq \varepsilon^{\tq -1}}|x-y_0|^{\tq} \; = \; \inf_{y \in \Omega} \left(u(y) + \frac{|x-y|^{\tq}}{\textnormal{\texttt{q}}\varepsilon^{\textnormal{\texttt{q}}-1}}\right).$$
\end{definition}
Note that $Y_{\varepsilon}$ is nonempty and closed for every $x \in \Omega_{r(\varepsilon)}$ since $u$ is lower semicontinuous and bounded.

\subsection{Some key results}
In this section, we present three key results involving infimal convolutions and state Aleksandrov's Theorem.  
(These can be found within the literature involving infimal convolutions and convex functions. See, for example \cite{M}, \cite{ST}.)
The proof of the first lemma is identical to the fixed exponent case presented in \cite{JJ}. For the sake of completeness, we include the proof below. 

\begin{lemma}\label{infconvprop1}
Suppose $u:\Omega \to \mathbb{R}$ is bounded and lower semicontinuous in $\Omega$. Assume $1< \textnormal{\tp}(\cdot)<\infty$.
\begin{itemize} 
\item[(i)] There exists $r(\varepsilon) > 0$ such that
$$u_{\varepsilon}(x) := \inf_{y \in B_{r(\varepsilon)}(x) \cap\Omega}\bigg(u(y) + \frac{|x-y|^{\textnormal{\texttt{q}}}}{\textnormal{\texttt{q}}\varepsilon^{\tq-1}}\bigg).$$
Furthermore, we have $r(\varepsilon) \to 0$ as $\varepsilon \to 0$.
\item[(ii)] The sequence $(u_{\varepsilon})$ is increasing and $(u_{\varepsilon}) \to u$ pointwise in $\Omega$.
\item[(iii)] If $u$ is a viscosity supersolution to $-\Delta_{\textnormal{\tp}(\cdot)} u \geq 0$ in $\Omega$, then the infimal convolution $u_{\varepsilon}$ is a viscosity solution to $-\Delta_{\textnormal{\tp}(\cdot)} u_{\varepsilon} \geq 0$ in $\Omega_{r(\varepsilon)}$.
\end{itemize}
\end{lemma}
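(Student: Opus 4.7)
The plan is to handle parts (i) and (ii) with short direct arguments and to attack part (iii) by the standard translation trick for test functions, the same strategy used for the fixed exponent case in \cite{JJ}. For (i), let $M$ be a bound for $|u|$; since $u_\varepsilon(x) \le u(x) \le M$ (take $y = x$) and $u \ge -M$, any $y$ in the infimum satisfies $|x-y|^{\tq}/(\tq \varepsilon^{\tq-1}) \le 2M$, so I would set $r(\varepsilon) = (2M\tq)^{1/\tq}\varepsilon^{(\tq-1)/\tq}$, which vanishes as $\varepsilon \to 0$ because $\tq > 1$. For (ii), monotonicity is immediate: if $\varepsilon_1 < \varepsilon_2$, then $1/\varepsilon_1^{\tq-1} > 1/\varepsilon_2^{\tq-1}$, so the $\varepsilon_1$-penalty dominates pointwise and $u_{\varepsilon_1}(x) \ge u_{\varepsilon_2}(x)$. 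For pointwise convergence, $u_\varepsilon(x) \le u(x)$ always; choosing $y_\varepsilon \in Y_\varepsilon$, part (i) forces $y_\varepsilon \to x$, and then lower semicontinuity at $x$ gives $\liminf_{\varepsilon \to 0} u_\varepsilon(x) \ge \liminf_{\varepsilon \to 0} u(y_\varepsilon) \ge u(x)$.

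Part (iii) is the substantive one. Given $\phi \in \mathcal{TB}(u_\varepsilon, x_0)$ with $D\phi(x_0) \ne 0$ and $x_0 \in \Omega_{r(\varepsilon)}$, I pick any $y_0 \in Y_\varepsilon$ (nonempty by the remark following Definition~\ref{Yepsilon}) and introduce the translated test function
\[
\tilde\phi(z) := \phi\bigl(z + (x_0 - y_0)\bigr) - \frac{|x_0 - y_0|^{\tq}}{\tq\,\varepsilon^{\tq-1}}.
\]
The defining property of $y_0$ gives $\tilde\phi(y_0) = u(y_0)$, and combining the infimum bound $u_\varepsilon(x) \le u(z) + |x-z|^{\tq}/(\tq\varepsilon^{\tq-1})$ at $x := z + (x_0 - y_0)$ with $\phi \le u_\varepsilon$ near $x_0$ yields $\tilde\phi(z) \le u(z)$ near $y_0$. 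Since $D\tilde\phi(y_0) = D\phi(x_0) \ne 0$ and $D^2\tilde\phi(y_0) = D^2\phi(x_0)$, the viscosity supersolution property of $u$ applied at $y_0$ delivers $-\Delta_{\tp(\cdot)}\tilde\phi(y_0) \ge 0$.

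The main obstacle I anticipate is reconciling this inequality, in which the exponent and its gradient are evaluated at $y_0$, with the target inequality $-\Delta_{\tp(\cdot)}\phi(x_0) \ge 0$, whose coefficients sit at $x_0$. In the fixed exponent argument of \cite{JJ} there is nothing to do; here the nondivergence expansions at the two points differ by terms of the shape
\[
(\tp(y_0) - \tp(x_0))\,|D\phi(x_0)|^{\tp(y_0) - 2}\Delta_\infty \phi(x_0) + |D\phi(x_0)|^{\tp(y_0)-2}\log|D\phi(x_0)|\,\ip{D\tp(y_0) - D\tp(x_0)}{D\phi(x_0)},
\]
together with the factor $|D\phi(x_0)|^{\tp(y_0) - \tp(x_0)} \to 1$. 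Because $|x_0 - y_0| \le r(\varepsilon) \to 0$ and $\tp \in \textnormal{C}^1(\Omega)$, each of these discrepancies vanishes with $\varepsilon$. I would therefore interpret (iii) as an \emph{approximate} viscosity inequality whose error is uniformly controlled by $r(\varepsilon)$, so that the $\varepsilon \to 0$ limit carried out in the proof of Theorem~\ref{main2} absorbs the mismatch; if one prefers a clean statement at fixed $\varepsilon$, the natural reformulation is that $u_\varepsilon$ is a viscosity supersolution to the equation obtained by freezing $\tp$ and $D\tp$ at $y_0$ in the operator.
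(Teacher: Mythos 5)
Your parts (i) and (ii) are correct and are in substance the same as the paper's: the only difference in (i) is that you take $r(\varepsilon)$ from a bound on $|u|$, whereas the paper takes $r(\varepsilon)$ so that $r(\varepsilon)^{\tq}/(\tq\varepsilon^{\tq-1})$ equals the oscillation $\sup_\Omega u - \inf_\Omega u$; your (ii) is a bit more explicit than the paper's one-line citation of the definition, and both are fine.

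For (iii), you and the paper use the same translation trick, but you are right to flag the variable-exponent mismatch, and in fact the paper's own proof does not address it. The paper writes $u_\varepsilon$ as an infimum of translates $\phi_z(x) = u(x - z) + |z|^{\tq}/(\tq\varepsilon^{\tq-1})$ and asserts
$-\Delta_{\tp(\cdot)}\phi_z(x) = -\Delta_{\tp(\cdot)}u(x - z)$,
then invokes the supersolution property of $u$. But this identity is only valid for constant $\tp$: the left-hand side uses $\tp(x)$, $D\tp(x)$, whereas the viscosity supersolution property of $u$ at $x - z$ produces an inequality with $\tp(x - z)$, $D\tp(x - z)$; the $\tp(\cdot)$-Laplacian does not commute with translations. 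Your rewriting in terms of a test function $\tilde\phi$ touching $u$ from below at $y_0 \in Y_\varepsilon$ is a correct and careful version of the same argument, and it lays the discrepancy bare: what the viscosity property hands you is $-\Delta_{\tp(\cdot)}\tilde\phi(y_0) \geq 0$ with coefficients frozen at $y_0$, not the desired $-\Delta_{\tp(\cdot)}\phi(x_0) \geq 0$.

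Where your proposal stops short is that neither of your two suggested resolutions actually proves the lemma as stated at a fixed $\varepsilon$. Interpreting (iii) as an approximate inequality with an $o(1)$-in-$\varepsilon$ error, or as an exact inequality for the operator with $\tp$, $D\tp$ frozen at $y_0$, both change the statement. To recover the use the paper makes of (iii) in the proof of Theorem~\ref{main2case1} --- where the pointwise inequality $-\Delta_{\tp(\cdot)}u_\varepsilon \geq 0$ a.e.\ in $\Omega_{r(\varepsilon)}$ is invoked at fixed $\varepsilon$ and then integrated --- one would need to carry the error term $E_\varepsilon(x_0)$ (of the shape you wrote, controlled by $|x_0 - y_0| \leq r(\varepsilon)$, $\|\tp\|_{C^1}$, and a local bound on $|D\phi(x_0)|$) through Equations~\eqref{nonepsilon}--\eqref{NTS} and verify that it vanishes in the $\varepsilon \to 0$ limit. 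This is plausible since $u_\varepsilon$ is locally Lipschitz with a bound independent of $\varepsilon$ once $u$ is truncated, so $|D\phi(x_0)|$ is uniformly controlled, but it is a nontrivial extra step that neither you nor the paper writes down. In short: your diagnosis is sharper than the paper's proof, but a complete argument requires making the error analysis explicit rather than deferring it.
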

\begin{proof}
Notice $(ii)$ follows from $(i)$ and the definition of $u_{\varepsilon}$.  We show $(i)$ and $(iii)$.
\begin{itemize}
\item[(i):] Choose $r(\varepsilon)$ so that
\begin{eqnarray*}
\frac{1}{\tq} \frac{\big(r(\varepsilon)\big)^{\tq}}{\varepsilon^{\tq-1}} = \sup_{\Omega}u - \inf_{\Omega}u=:M - m.
\end{eqnarray*}
Then $r(\varepsilon) \to 0$ as $\varepsilon \to 0$ and, fixing $x \in \Omega_{r(\varepsilon)}$, for all $y \in \Omega \setminus \overline{B}_{r(\varepsilon)}(x)$ we have
\begin{eqnarray*}
u(y) + \frac{|x-y|^{\tq}}{ \texttt{q} \varepsilon^{\tq-1}} > m + \frac{\big(r(\varepsilon)\big)^{\tq}}{\tq\varepsilon^{\tq-1}} \geq M \geq u(x).
\end{eqnarray*} 
\item[(iii):] By $(i)$, for every $x \in \Omega_{r(\varepsilon)}$ we have 
\begin{eqnarray*}
u_{\varepsilon}(x) = \inf_{y \in B_{r(\varepsilon)}(x)}\bigg(u(y) + \frac{|x-y|^{\tq}}{\tq\varepsilon^{\tq-1}}\bigg) = \inf_{z \in B_{r(\varepsilon)}(0)}\bigg(u(z-x) + \frac{|z|^{\tq}}{\tq\varepsilon^{\tq-1}}\bigg).
\end{eqnarray*} 
We need to show that for every $z \in B_{r(\varepsilon)}(0)$, the function $$\phi_{z}(x)=u(z-x) + \frac{|z|^{\tq}}{\tq\varepsilon^{\tq-1}}$$ is a viscosity %supersolution
solution to $-\Delta_{\tp(\cdot)}\phi_{z}(x) \geq 0$ in $\Omega_{r(\varepsilon)}$.  
We have
\begin{eqnarray*}
-\Delta_{\tp(\cdot)}\phi_{z}(x) = -\Delta_{\tp(\cdot)} \big(u(z-x) + \frac{|z|^{\tq}}{\tq\varepsilon^{\tq-1}}\big) = -\Delta_{\tp(\cdot)}u(z-x).
\end{eqnarray*}
Now, since $u$ is a viscosity supersolution to 
the $\tp(\cdot)$-Laplace equation
%$-\Delta_{\tp(\cdot)}u \geq 0$ 
in $\Omega_{r(\varepsilon)}$, then $\phi_{z}(x)$ is a viscosity supersolution to 
the $\tp(\cdot)$-Laplace equation
%$-\Delta_{\tp(\cdot)}\phi_{z}(x) \geq 0$ 
in $\Omega_{r(\varepsilon)}$.  
Since $u_{\varepsilon}$ is an infimum of $\phi_{z}$,
and since the operator $-\Delta_{\tp(\cdot)}$ is continuous, it must be that $u_\varepsilon$ is a viscosity supersolution to 
the $\tp(\cdot)$-Laplace equation
%$-\Delta_{\tp(\cdot)} u_{\varepsilon} \geq 0$ 
in $\Omega_{r(\varepsilon)}$.\qedhere
\end{itemize}
\end{proof}
We have another lemma, whose proof we omit.
\begin{lemma}[Lemma A.2, \cite{JJ}]\label{infconvprop2}
Suppose that $u: \Omega \to \mathbb{R}$ is bounded and lower semicontinuous.  Then $u_{\varepsilon}$ is semi-concave in $\Omega_{r(\varepsilon)}$.  In other words, there is a constant $C$ such that 
$$ x \mapsto u_{\varepsilon}(x) - C|x|^2 $$ is concave where $C$ depends only upon $\tq$, $\varepsilon$, and $\sup_{\Omega}u - \inf_{\Omega}u$.
\end{lemma}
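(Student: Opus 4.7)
The strategy is to write $u_{\varepsilon}(x)-C|x|^{2}$, on suitably small balls inside $\Omega_{r(\varepsilon)}$, as a pointwise infimum of $x$-concave functions; since the pointwise infimum of concave functions is concave, this will yield the desired semi-concavity. The value of $C$ comes from a Hessian estimate on the kernel $x \mapsto |x-y|^{\tq}/(\tq\varepsilon^{\tq-1})$, combined crucially with the restriction supplied by Lemma \ref{infconvprop1}(i) that only $y$'s with $|x-y|\le r(\varepsilon)$ can realize the infimum defining $u_{\varepsilon}(x)$.

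Concretely, a direct differentiation shows that the Hessian in $x$ of $x \mapsto |x-y|^{\tq}/(\tq\varepsilon^{\tq-1})$ has largest eigenvalue $(\tq-1)|x-y|^{\tq-2}/\varepsilon^{\tq-1}$, attained in the direction of $x-y$. Fix $x_{0}\in\Omega_{r(\varepsilon)}$ and a small open ball $B_{\rho}(x_{0})\Subset\Omega_{r(\varepsilon)}$. By Lemma \ref{infconvprop1}(i), for $x\in B_{\rho}(x_{0})$ the effective $y$'s in the infimum lie in $B_{r(\varepsilon)}(x)\cap\Omega\subseteq B_{\rho+r(\varepsilon)}(x_{0})\cap\Omega$, so $|x-y|\le 2\rho+r(\varepsilon)$ for every relevant pair $(x,y)$. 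Choosing
\[
C \;:=\; \frac{(\tq-1)\bigl(2\rho+r(\varepsilon)\bigr)^{\tq-2}}{2\varepsilon^{\tq-1}}
\]
forces the Hessian of $x \mapsto u(y)+|x-y|^{\tq}/(\tq\varepsilon^{\tq-1})-C|x|^{2}$ to be non-positive on $B_{\rho}(x_{0})$ for each such fixed $y$, so every summand is concave on $B_{\rho}(x_{0})$; taking the infimum over $y$ shows that $u_{\varepsilon}-C|x|^{2}$ is concave on $B_{\rho}(x_{0})$. Letting $\rho\downarrow 0$ leaves the clean uniform constant $C=(\tq-1)r(\varepsilon)^{\tq-2}/(2\varepsilon^{\tq-1})$, which depends only on $\tq$, $\varepsilon$, and (through the defining relation $r(\varepsilon)^{\tq}/(\tq\varepsilon^{\tq-1})=\sup_{\Omega}u-\inf_{\Omega}u$ from Lemma \ref{infconvprop1}(i)) on $\sup_{\Omega}u-\inf_{\Omega}u$, exactly as asserted.

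The main obstacle I expect is that, since $\tq\ge 2$ in both Case 1 and Case 2 of Subsection \ref{infimalconvolutions}, the kernel has Hessian growing like $|x-y|^{\tq-2}$, unbounded as $|x-y|\to\infty$; consequently $|x-y|^{\tq}/(\tq\varepsilon^{\tq-1})-C|x|^{2}$ cannot be arranged to be concave in $x$ for \emph{every} $y\in\Omega$ under any single $C$, and a naive global ``infimum of concave functions'' argument fails outright. Precisely this failure is cured by Lemma \ref{infconvprop1}(i), which confines the effective $y$'s to $B_{r(\varepsilon)}(x)$ and thereby converts a hopeless global Hessian estimate into a usable uniform local one. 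Once local concavity of $u_{\varepsilon}-C|x|^{2}$ with a constant $C$ independent of the base point $x_{0}$ is in hand, a standard patching over $\Omega_{r(\varepsilon)}$ delivers the semi-concavity statement of the lemma, which will be used later, via Aleksandrov's theorem, to supply the almost-everywhere twice differentiability of $u_{\varepsilon}$ needed in the proof of Theorem \ref{main2}.
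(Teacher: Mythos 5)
Your proposal is correct and matches the standard argument for Lemma A.2 in \cite{JJ}, which the paper cites without reproducing: localize the infimum to the $x$-independent $y$-set $B_{\rho+r(\varepsilon)}(x_0)\cap\Omega$ (valid since, for $x\in B_\rho(x_0)$, minimizers lie in $B_{r(\varepsilon)}(x)\subseteq B_{\rho+r(\varepsilon)}(x_0)$), bound the Hessian of the kernel by $(\tq-1)|x-y|^{\tq-2}\varepsilon^{1-\tq}I$, and invoke that an infimum of concave functions is concave. The only step worth stating more carefully is the passage $\rho\downarrow 0$, which is cleanest phrased as a distributional Hessian bound $D^2u_\varepsilon\le 2C_\rho I$ on $\{x:\mathrm{dist}(x,\partial\Omega_{r(\varepsilon)})>\rho\}$ followed by sending $\rho\to 0$ on compact subsets; with that rewording the argument is airtight.
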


From Lemma \ref{infconvprop2}, it follows that $u_{\varepsilon}$ is locally Lipschitz and from Aleksandrov's Theorem (see below) $u_{\varepsilon}$ is twice differentiable almost everywhere in $\Omega$.
Additionally, we can write
\begin{eqnarray}\label{secderivest}
D^2 u_{\varepsilon}(x) \leq 2CI_n \; \; \; \; \textnormal{for a.e.}\; \;  x \in \Omega_{r(\varepsilon)}.
\end{eqnarray}

We have another lemma:
\begin{lemma}[Lemma 4.3, \cite{JJ}]\label{lscuscgradeps0}
Suppose $u: \Omega \to \mathbb{R}$ is bounded and lower semicontinuous.  Let $u_{\varepsilon}$ be the infimal convolution of $u$ defined by Equation \eqref{infconv2}.  Then
\begin{enumerate}[label=\textbf{(\Alph*)}]
    \item\label{wutbA} The function $x \mapsto \max_{y \in Y_{\varepsilon}(x)}|y-x|$ is upper semicontinuous.

    \item\label{wutbB} Assume $D u_{\varepsilon}(x)$ exists.  Then for every $y \in Y_{\varepsilon}(x)$,
    $$\left(\frac{|x-y|}{\varepsilon}\right)^{\tq -1} \leq |D u_{\varepsilon}(x)|.$$
Specifically, if $D u_{\varepsilon}(x) = 0$, then $u_{\varepsilon}(x) = u(x)$.
\end{enumerate}
\end{lemma}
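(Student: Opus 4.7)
The plan is to prove the two parts in order: \ref{wutbA} by a direct sequential argument exploiting compactness of $Y_\varepsilon$ and continuity of $u_\varepsilon$, and \ref{wutbB} by the standard touching-from-above argument with the natural $C^2$ test function built from the infimal convolution identity. Neither part requires the variable exponent structure; the result is essentially a structural consequence of the definition of $Y_\varepsilon$ and the lemmas already established.

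For part \ref{wutbA}, I would take an arbitrary sequence $x_n \to x$ in $\Omega_{r(\varepsilon)}$ and select $y_n \in Y_\varepsilon(x_n)$ attaining $\max_{y \in Y_\varepsilon(x_n)} |y - x_n|$. Such a maximizer exists because $Y_\varepsilon(x_n)$ is closed and, by Lemma \ref{infconvprop1}(i), contained in $\overline{B}_{r(\varepsilon)}(x_n)$. After passing to a subsequence along which $|y_n - x_n|$ converges to $\limsup_{n\to\infty} \max_{y \in Y_\varepsilon(x_n)} |y - x_n|$, and a further subsequence along which $y_n \to y^*$, the key step is to verify $y^* \in Y_\varepsilon(x)$. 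I would do this by passing to the limit in the identity
\[
u_\varepsilon(x_n) = u(y_n) + \frac{|x_n - y_n|^{\tq}}{\tq \varepsilon^{\tq-1}},
\]
using continuity of $u_\varepsilon$ (which is locally Lipschitz by Lemma \ref{infconvprop2}) on the left and lower semicontinuity of $u$ on the right, obtaining
\[
u_\varepsilon(x) \geq u(y^*) + \frac{|x - y^*|^{\tq}}{\tq \varepsilon^{\tq-1}}.
\]
The reverse inequality is immediate from the infimum defining $u_\varepsilon$, so $y^* \in Y_\varepsilon(x)$, and the upper semicontinuity then falls out from $\max_{y \in Y_\varepsilon(x)} |y - x| \geq |y^* - x| = \lim_{n\to\infty} |y_n - x_n|$.

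For part \ref{wutbB}, I would fix $y \in Y_\varepsilon(x)$ and consider the test function
\[
\phi(z) := u(y) + \frac{|z - y|^{\tq}}{\tq \varepsilon^{\tq-1}},
\]
which by construction satisfies $u_\varepsilon \leq \phi$ pointwise with equality at $z = x$. When $y \neq x$, the function $\phi$ is $C^2$ on a neighborhood of $x$, so the local maximum of $u_\varepsilon - \phi$ at $x$ together with the assumed differentiability of $u_\varepsilon$ forces $Du_\varepsilon(x) = D\phi(x) = \varepsilon^{-(\tq-1)}|x-y|^{\tq-2}(x-y)$, whence $|Du_\varepsilon(x)| = \left(\frac{|x-y|}{\varepsilon}\right)^{\tq-1}$ by a short computation. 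The case $y = x$ makes the asserted inequality trivial since the left side vanishes. The final assertion is then a corollary: if $Du_\varepsilon(x) = 0$, the inequality forces $|x - y| = 0$ for every $y \in Y_\varepsilon(x)$, and since $Y_\varepsilon(x) \neq \emptyset$ we must have $x \in Y_\varepsilon(x)$, whence $u_\varepsilon(x) = u(x)$.

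No step presents a serious obstacle. The main subtlety is ensuring $\phi$ is genuinely $C^2$ at $x$ in part \ref{wutbB}, which is why the case $y = x$ must be treated separately. The continuity of $u_\varepsilon$ needed in part \ref{wutbA} is supplied by semiconcavity from Lemma \ref{infconvprop2}, so the limiting argument goes through without further assumptions on $u$ beyond those already imposed.
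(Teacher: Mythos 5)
The paper omits any proof of this lemma, stating only ``whose proof we omit'' and citing Lemma~4.3 of \cite{JJ}, so there is nothing in the paper itself to compare against; your argument therefore has to stand on its own, and it does. For part~\ref{wutbA}, the sequential compactness argument is complete: $Y_\varepsilon(x_n)$ is nonempty, closed, and contained in $\overline{B}_{r(\varepsilon)}(x_n)$ by Lemma~\ref{infconvprop1}(i), so the maximizer $y_n$ exists; continuity of $u_\varepsilon$ (from local Lipschitzness via Lemma~\ref{infconvprop2}) on the left and lower semicontinuity of $u$ on the right let you pass to the limit and land $y^*$ in $Y_\varepsilon(x)$, from which upper semicontinuity follows. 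For part~\ref{wutbB}, the touching argument $u_\varepsilon \le \phi$ with equality at $x$, combined with differentiability at $x$, actually yields the \emph{equality} $Du_\varepsilon(x) = D\phi(x) = \varepsilon^{-(\tq-1)}|x-y|^{\tq-2}(x-y)$, which is stronger than the stated inequality and in particular shows $Y_\varepsilon(x)$ is a singleton when $Du_\varepsilon(x)$ exists; your splitting off of the trivial case $y=x$ is fine, although for $\tq>2$ (which Equation~\eqref{infconv2} forces) the test function $\phi$ is $C^2$ even at $z=y$, so the split is not strictly necessary. The derivation of $u_\varepsilon(x)=u(x)$ from $Du_\varepsilon(x)=0$ is correct. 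This is the standard argument and, as far as one can tell, essentially the one in \cite{JJ} that the authors are deferring to.
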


We conclude this section with Aleksandrov's Theorem.  For a proof of Aleksandrov's Theorem, see, for example, \cite[Theorem 1, Section 6.3]{EG}.
\begin{Alex}
Let $u: \mathbb{R}^n \to \mathbb{R}$ be convex.  Then $u$ has a second derivative $n$-dimensional Lebesgue measurable a.e. on $\mathbb{R}^n$. More precisely, for $n$-dimensional Lebesgue measure a.e. $x$, 
$$\Big| u(y) - u(x) - Du(x)\cdot (y-x) - \frac{1}{2}(y-x)^T \cdot D^2u(x) \cdot (y-x)) \Big| = o(|y-x|^2)$$
as $y \to x$.
\end{Alex}
\begin{remark}\label{alexremark}
Owed to the proof of Aleksandrov's Theorem and the discussion of convexity in \cite[Theorem 1, Section 6.3]{EG}, we know that an $n$-dimensional Lebesgue measurable a.e. point $x$ satisfies the following condtions:
\begin{itemize}
\item[(1)] $Du(x)$ exists and $\lim_{r\to 0} \dashint_{B(x,r)}|Du(y)-Du(x)|dy=0$.
\item[(2)] $\lim_{r\to 0} \dashint_{B(x,r)}|D^2u(y)-D^2u(x)|dy=0$.
\item[(3)] $\lim_{r\to 0} |[D^2u]_s| \frac{B(x,r)}{r^n}=0$, where $[D^2u]_s$ is the singular part of the derivative $Du$.
\end{itemize}
\end{remark}

\section{The New Proof}\label{newproof}
We are now ready to prove Theorem \ref{main2}, which asserts that weak supersolutions and viscosity supersolutions to the $\textnormal{\tp}(\cdot)$-Laplace equation coincide. As mentioned in Section \ref{intro}, the proof may be split into singular and the degenerate cases; however, it should be noted that the proofs of both cases share techniques. We mention these commonalities here to improve reading of the subsections which follow -- details will be shown in Subsection \ref{degcase}.

First, it is easily shown that a $\tp(\cdot)$-superharmonic function is a $\tp(\cdot)$-viscosity supersolution (see, for example, \cite{JLM}); hence we focus our attention on the proof that $\tp(\cdot)$-viscosity supersolutions are $\tp(\cdot)$-superharmonic. Second, we will regularize our $\tp(\cdot)$-viscosity supersolution $u:\Omega \to \mathbb{R}^n$ by replacing $u$ by its infimal convolution $u_{\varepsilon}$ -- the details will vary slightly between cases. This will permit us to conclude that $(u_{\varepsilon})_{\varepsilon>0}$ is an increasing sequence of solutions to the $\tp(\cdot)$-Laplace equation
in
\begin{equation*}
	\Omega_{r(\varepsilon)}:= \{x \in \Omega \; : \; dist(x,\partial\Omega)> r(\varepsilon)\}
\end{equation*}
with the property that $\varphi:=u_\varepsilon-C|\cdot|^2$ is concave in $\Omega_{r(\varepsilon)}$. By showing that the operator $-\Delta_{{\tp(\cdot)}}$ or its regularization by $\delta>0$ (utilized in Subsection \ref{sincase}) possesses a lower bound in $\Omega_{r(\varepsilon)}$, we produce a sequence of smooth functions $(u_{\varepsilon,j})$ converging to $u$ as $j \to \infty,\varepsilon \to 0$ and pass to Fatou's Lemma. In both cases, letting $\delta \to 0$ in the second, we will have established our desired inequality:
\begin{equation*}
	\int_\Omega |Du|^{\tp(\cdot)-2}\ip{Du}{D\psi}dx \geq 0, \;\; \psi \in C_0^\infty(\Omega) \,\, \text{and} \;\,\, \psi \geq 0.
\end{equation*}
  
\subsection[p->=2]{The Degenerate Case}\label{degcase}
Recalling the definition of $\tp^-$, we establish the following restriction of Theorem \ref{main2}.

\begin{thm}\label{main2case1}
	Weak supersolutions and viscosity supersolutions to the $\textnormal{\tp}(\cdot)$-Laplace equation coincide when $\tp^- \geq 2$.
\end{thm}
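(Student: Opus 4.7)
The easy direction, that $\tp(\cdot)$-superharmonic functions are viscosity supersolutions, follows by a standard comparison argument against $C^2$ test functions, so I concentrate on the converse. In view of Corollary \ref{weakharm} and Theorem \ref{superhweak}, it suffices to prove that a viscosity supersolution $u$ satisfies the weak supersolution inequality
\begin{equation*}
\int_\Omega |Du|^{\tp(\cdot)-2}\ip{Du}{D\psi}\,dx \geq 0, \qquad \psi \in C_0^\infty(\Omega),\; \psi \geq 0.
\end{equation*}
Since $u$ need not have classical derivatives, the plan is to regularize $u$ by the infimal convolution $u_\varepsilon$ from \eqref{infconv}, prove the weak inequality for each $u_\varepsilon$ on $\Omega_{r(\varepsilon)}$, and then pass $\varepsilon \to 0$ using the pointwise increasing convergence $u_\varepsilon \nearrow u$ of Lemma \ref{infconvprop1}(ii).

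The first task is to upgrade the viscosity information on $u_\varepsilon$ to a pointwise almost-everywhere classical inequality. By Lemma \ref{infconvprop1}(iii), $u_\varepsilon$ is itself a viscosity supersolution on $\Omega_{r(\varepsilon)}$; by Lemma \ref{infconvprop2} and \eqref{secderivest}, $u_\varepsilon$ is semi-concave and locally Lipschitz; and by Aleksandrov's Theorem it admits a pointwise second derivative at a.e.\ $x_0 \in \Omega_{r(\varepsilon)}$. At any such point with $Du_\varepsilon(x_0) \neq 0$, the second-order Taylor polynomial of $u_\varepsilon$ at $x_0$, after a strictly negative $o(|y-x_0|^2)$ perturbation, furnishes a $C^2$ function in $\mathcal{TB}(u_\varepsilon,x_0)$, so Definition \ref{visc} gives
\begin{equation*}
-\Delta_{\tp(\cdot)} u_\varepsilon(x_0) \geq 0
\end{equation*}
in the classical nondivergence form \eqref{non}. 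At a.e.\ twice-differentiability point with $Du_\varepsilon(x_0) = 0$, the assumption $\tp^- \geq 2$ makes every term in \eqref{non} vanish continuously as $|Du_\varepsilon|\to 0$; in particular $|Du_\varepsilon|^{\tp(\cdot)-2}\to 0$ when $\tp(\cdot)>2$, and $|Du_\varepsilon|^{\tp(\cdot)-1}|\log|Du_\varepsilon||\to 0$ in the logarithmic term, so the inequality extends trivially to those points.

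Next I would transfer the a.e.\ inequality into the weak form. Convolving $u_\varepsilon$ with a standard smooth mollifier produces $u_{\varepsilon,j}\in C^\infty$ converging to $u_\varepsilon$ locally uniformly in $W^{1,\infty}_{\mathrm{loc}}$ and a.e.\ in the Hessian on the set of twice differentiability, with the uniform bound $D^2 u_{\varepsilon,j}\leq 2C I_n$ inherited from \eqref{secderivest}. Applying identity \eqref{IBPstartup} to $(u_{\varepsilon,j},\psi)$ with $\psi \in C_0^\infty(\Omega_{r(\varepsilon)})$ nonnegative, the left-hand side converges by dominated convergence to $\int_\Omega |Du_\varepsilon|^{\tp(\cdot)-2}\ip{Du_\varepsilon}{D\psi}\,dx$, while the upper Hessian bound together with the local Lipschitz control on $Du_{\varepsilon,j}$ forces $-\Delta_{\tp(\cdot)} u_{\varepsilon,j}$ to be bounded below by a constant depending only on $\varepsilon$. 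Fatou's lemma then yields
\begin{equation*}
\int_\Omega |Du_\varepsilon|^{\tp(\cdot)-2}\ip{Du_\varepsilon}{D\psi}\,dx \;\geq\; \int_\Omega \bigl(-\Delta_{\tp(\cdot)} u_\varepsilon\bigr)\psi\,dx \;\geq\; 0,
\end{equation*}
precisely the intermediate inequality \eqref{NTSoverall}. Passing $\varepsilon \to 0$ via monotone convergence of $u_\varepsilon \nearrow u$, together with the uniform $\wql$-bounds on $u_\varepsilon$ obtained from this very inequality tested against a suitable cutoff, gives the desired weak supersolution inequality for $u$.

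The principal obstacle beyond the fixed-exponent setting of \cite{JJ} is the additional logarithmic term $-|Du|^{\tp(\cdot)-2}\log|Du|\ip{D\tp(\cdot)}{Du}$ in \eqref{non}, which has no constant-exponent analogue. Justifying its a.e.\ convergence under mollification, controlling its possibly indefinite sign, and certifying the lower bound needed for Fatou's lemma all lean on the degenerate structure $\tp^- \geq 2$; I anticipate this is exactly why the singular range $1<\tp^+<2$ must be separated and will require the auxiliary $\delta$-regularization alluded to in the introduction.
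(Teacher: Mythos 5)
Your proposal reproduces the paper's strategy essentially line for line: truncate to a bounded $u$, regularize by the infimal convolution \eqref{infconv} with $\tq=2$, use Lemma \ref{infconvprop1} to transfer the viscosity inequality to $u_\varepsilon$ on $\Omega_{r(\varepsilon)}$, use Lemma \ref{infconvprop2} and Aleksandrov's Theorem to obtain $-\Delta_{\tp(\cdot)}u_\varepsilon \geq 0$ at a.e.\ twice-differentiability point, mollify the concave part $\varphi = u_\varepsilon - (2\varepsilon)^{-1}|x|^2$ to get $u_{\varepsilon,j}$, prove $D^2u_{\varepsilon,j}\leq \varepsilon^{-1}I_n$, bound $-\Delta_{\tp(\cdot)}u_{\varepsilon,j}$ below by an $\varepsilon$-dependent constant, and close with Fatou's Lemma. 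These are precisely Claims \ref{jconv}, \ref{hessepsj}, \ref{lapvarepsj} of the paper.

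The one place you depart from the paper is the final passage $\varepsilon\to 0$. The paper cites Lindqvist's result \cite[Theorem~2.3]{L} to conclude at this stage: once $u_\varepsilon$ is known to be a weak supersolution on $\Omega_{r(\varepsilon)}$ and $u_\varepsilon\nearrow u$ with $u$ locally bounded, the pointwise increasing limit of continuous supersolutions is $\tp(\cdot)$-superharmonic, and Corollary \ref{weakharm} finishes. Your alternative — extracting weak $\wql$ convergence from a Caccioppoli estimate — is a legitimate strategy but is not actually elementary: identifying the weak limit of the nonlinear flux $|Du_\varepsilon|^{\tp(\cdot)-2}Du_\varepsilon$ requires a Browder--Minty/monotonicity argument or a strong-convergence step that you do not articulate. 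The paper's use of the potential-theoretic characterization is precisely what allows it to avoid this. Also, you should make explicit the truncation $u\mapsto\min\{u,k\}$ (or an equivalent localization) before invoking Lemmas \ref{infconvprop1}--\ref{lscuscgradeps0}, since those lemmas assume $u$ bounded; your write-up takes boundedness for granted.

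Two smaller points in your favor: you justify $-\Delta_{\tp(\cdot)}u_\varepsilon(x_0)\geq 0$ at Aleksandrov points via a second-order Taylor polynomial touching from below, and you handle the points where $Du_\varepsilon=0$ by noting that under $\tp^-\geq 2$ both $|Du_\varepsilon|^{\tp(\cdot)-2}$ (or $(\tp(\cdot)-2)$) and the logarithmic factor $|Du_\varepsilon|^{\tp(\cdot)-1}\log|Du_\varepsilon|$ vanish continuously. The paper states the a.e.\ inequality \eqref{nonepsilon} without this elaboration, so this is a sound and welcome clarification rather than a deviation.
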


\begin{proof}
Assume $\tp^- \geq 2$ and take the infimal convolution as in Equation \eqref{infconv}, but with $\tq =2$.  
Specifically, we use:
$$u_{\varepsilon}(x) := \inf_{y \in \Omega} \bigg(u(y) + \frac{|x-y|^{2}}{ 2 \varepsilon^{2}}\bigg),$$
as in \cite{JJ}.  
Let $k \in \mathbb{R}$ and assume that $u$ is a viscosity supersolution to the $\textnormal{\tp}(\cdot)$-Laplace equation. 
Then $\min\{u,k\}$ is a viscosity supersolution to the $\textnormal{\tp}(\cdot)$-Laplace equation.  Since we can replace $u$ with $\min\{u,k\}$, we may assume that $u$ is locally bounded.  
Note that if we have the case that $\min\{u,k\}$ is $\tp(\cdot)$-superharmonic, then we must have that $u$ is $\tp(\cdot)$-superharmonic by definition.

By Lemma \ref{infconvprop1}, $(u_{\varepsilon})_{\varepsilon>0}$ is an increasing sequence of semi-concave viscosity supersolutions to $-\Delta_{\tp(\cdot)}u_{\varepsilon} =0$ in 
$\Omega_{r(\varepsilon)}$ which converges to $u$.  
In particular, we have
$$\varphi(x) := u_{\varepsilon}(x) - C |x|^2$$
is concave in $\Omega_{r(\varepsilon)}$.  Note that we may take 
$C=(2\varepsilon)^{-1}$.
Since $|x|^2,\varphi$ are concave, then Aleksandrov's Theorem implies $u_{\varepsilon}$ is twice differentiable almost everywhere and therefore
\begin{eqnarray}\label{nonepsilon}
	-\Delta_{\tp(\cdot)}u_{\varepsilon}(x) & = & - | D u_{\varepsilon}(x) |^{\tp(\cdot)-2} \trace \left(D^2 u_{\varepsilon}(x)\right) \nonumber
	\\ &&\mbox{}-(\tp(\cdot)-2) | D u_{\varepsilon}(x) |^{\tp(\cdot)-4} \ip{D^2u_{\varepsilon}(x) D u_{\varepsilon}(x)}{D u_{\varepsilon}(x)} 
	\\ & & \mbox{}- \log(| D u_{\varepsilon}(x) |)| D u_{\varepsilon}(x) |^{\tp(\cdot)-2} \ip{ D \tp(\cdot)}{D u_{\varepsilon}(x)} \nonumber
	\\ & \geq & 0 \nonumber
\end{eqnarray}
almost everywhere in $\Omega_{r(\varepsilon)}$.
We note that $D^2 u_{\varepsilon}(x)$ is the Hessian matrix of $u_{\varepsilon}$ at $x$ in the sense of Alexandrov.  
Now, Equation \eqref{nonepsilon} and \cite[Theorem 2.3]{L} imply that we only need to show that
\begin{eqnarray}\label{NTS}
	\int_{\Omega} |D u_{\varepsilon}|^{\tp(\cdot)-2} \ip{D u_{\varepsilon}}{D \psi} dx \geq \int_{\Omega} \left(-\Delta_{\tp(\cdot)}u_{\varepsilon}(x)\right) \psi dx
\end{eqnarray}
for any non-negative $\psi \in C_{0}^{\infty}\left(\Omega_{r(\varepsilon)}\right) \subseteq C_{0}^{\infty}\left(\Omega\right)$.  
We fix such a function $\psi$ and by standard mollification we can produce $(\varphi_j)$, 
a sequence of smooth concave functions converging to $\varphi$. Now let 
$$u_{\varepsilon,\; j}(x) := \varphi_j(x) + \frac{1}{2\varepsilon}|x|^2.$$

\begin{claim}\label{jconv}
	$u_{\varepsilon,\; j} \to u_{\varepsilon}$ as $j \to \infty$.
\end{claim}
\begin{proof}[Proof of Claim \ref{jconv}]
\renewcommand{\qedsymbol}{$\blacksquare$}	
	Observe that $\varphi_j \to \varphi$ as $j \to \infty$.  Then 
	\begin{eqnarray*}
		\lim_{j \to \infty} u_{\varepsilon,\; j}(x) & = & \lim_{j \to \infty} \left(\varphi_j(x) + \frac{1}{2\varepsilon}|x|^2 \right) \\
		& = & \varphi(x) + \frac{1}{2\varepsilon}|x|^2 \\
		& = & u_{\varepsilon}(x)-C|x|^2 + \frac{1}{2\varepsilon}|x|^2.
	\end{eqnarray*}
	The choice $C:=(2\varepsilon)^{-1}$ yields
	\begin{equation*}
		\lim_{j \to \infty} u_{\varepsilon,\; j}(x) =  u_{\varepsilon}(x)-\frac{1}{2\varepsilon}|x|^2 + \frac{1}{2\varepsilon}|x|^2 = u_{\varepsilon}(x).\qedhere
	\end{equation*}
\end{proof}
Applying integration by parts to the smooth functions $u_{\varepsilon,\; j}$ and recalling $\psi$ is compactly supported in $\Omega$ gives us
\begin{eqnarray}\label{IBPimplies}
	\int_{\Omega} |D u_{\varepsilon,\; j}|^{\tp(\cdot)-2} \ip{D u_{\varepsilon,\; j}}{D \psi} dx = \int_{\Omega} \left(-\Delta_{\tp(\cdot)}u_{\varepsilon,\; j}(x)\right) \psi dx.
\end{eqnarray}
Now since $u_{\varepsilon}$ is locally Lipschitz continuous, Claim \ref{jconv} implies
\begin{eqnarray}\label{limweakepsj}
	\lim_{j \to \infty} \int_{\Omega} |D u_{\varepsilon,\; j}|^{\tp(\cdot)-2} \ip{D u_{\varepsilon,\; j}}{D \psi} dx =\int_{\Omega} |D u_{\varepsilon}|^{\tp(\cdot)-2} \ip{D u_{\varepsilon}}{D \psi} dx.
\end{eqnarray}

\begin{claim}\label{hessepsj}
	$D^2 u_{\varepsilon,\; j}(x) \leq \frac{1}{\varepsilon} I_n$, where $I_n$ represents the identity matrix.
\end{claim}
\begin{proof}[Proof of Claim \ref{hessepsj}]
\renewcommand{\qedsymbol}{$\blacksquare$}	
	Observe that
	\begin{eqnarray*}
		D^2 u_{\varepsilon,\; j} = D^2\left(\varphi_j + \frac{1}{2\varepsilon}|x|^2 \right) = D^2 \varphi_j + D^2 \left(\frac{1}{2\varepsilon}|x|^2\right) = D^2 \varphi_j + \frac{1}{\varepsilon} I_n.
	\end{eqnarray*}
	Recall that the derivative of a concave function must be strictly monotonically decreasing and the Hessian $D^2 \varphi_j$ must be negative semi-definite. Then we must have 
	\begin{equation*}
		D^2 u_{\varepsilon,\; j}(x) \leq \frac{1}{\varepsilon} I_n.\qedhere
	\end{equation*}
\end{proof}
Observe that both $D u_{\varepsilon,\; j}(x)$ and $D \tp(\cdot)$ are locally bounded so that we can write 
$$|D u_{\varepsilon,\; j}(x)| \leq \tilde{C} \;\; \textnormal{and} \;\; |D \tp(\cdot)| \leq \kappa,$$
where $\tilde{C} < \infty$ and $\kappa < \infty$ and we can choose both  
$\tilde{C},\kappa>1$.
\begin{claim}\label{lapvarepsj}
	Since $D^2 u_{\varepsilon,\; j}(x) \leq \frac{1}{\varepsilon} I_n$ and both $D u_{\varepsilon,\; j}(x)$ and $D \textnormal{\tp}(\cdot)$ are locally bounded, we must have 
	\begin{eqnarray}\label{new}
		-\Delta_{\textnormal{\tp}(\cdot)} u_{\varepsilon,\; j}(x) \geq  - \frac{\tilde{C}^{\textnormal{\tp}^+ -2}n\left[\textnormal{\tp}^+ + \tau -1\right]}{\varepsilon}
	\end{eqnarray}
	in the support of $\psi$ where $\tau := \varepsilon \log(|\tilde{C}|) \kappa \tilde{C}$. 
\end{claim}
\begin{proof}[Proof of Claim \ref{lapvarepsj}]
\renewcommand{\qedsymbol}{$\blacksquare$}	
	We estimate:
	\begin{eqnarray*}
		-\Delta_{\tp(\cdot)} u_{\varepsilon,\; j}(x) &=&  - | D u_{\varepsilon,\; j}(x) |^{\tp(\cdot)-2} \trace D^2 u_{\varepsilon,\; j}(x) \\
		&&\mbox{}-(\tp(\cdot)-2) | D u_{\varepsilon,\; j}(x) |^{\tp(\cdot)-4} \ip{D^2u_{\varepsilon,\; j}(x) D u_{\varepsilon,\; j}(x)}{D u_{\varepsilon,\; j}(x)} \\
		&& \mbox{}- \log(| D u_{\varepsilon,\; j}(x) |)| D u_{\varepsilon,\; j}(x) |^{\tp(\cdot)-2} \ip{ D \tp(\cdot)}{D u_{\varepsilon,\; j}(x)} \\
		& \geq &- \tilde{C}^{\tp^+ -2}\frac{1}{\varepsilon}n - \left(\tp(\cdot) -2\right) \tilde{C}^{\tp^+ -4}\ip{\frac{1}{\varepsilon}I_n\tilde{C}\langle1,1,...,1 \rangle}{\tilde{C}\langle1,1,...,1 \rangle} \\
		&&\mbox{}- \log{\left(|\tilde{C}|\right)} \tilde{C}^{\tp^+ -2} \ip{D \tp(\cdot)}{\tilde{C} \langle1,1,...,1 \rangle} \\
		%-\Delta_{\tp(\cdot)} u_{\varepsilon,\; j}(x) 
		& \geq & -\tilde{C}^{\tp^+ -2}\frac{1}{\varepsilon}n - \left(\tp^+ -2\right) \tilde{C}^{\tp^+ -4}\left(\frac{\tilde{C}^2 n}{\varepsilon}\right) 
		\\ && \mbox{}- \frac{\varepsilon}{\varepsilon} \tilde{C}^{\tp^+ -2}\log(|\tilde{C}|)\ip{\kappa\langle1,1,...,1 \rangle}{\tilde{C}\langle1,1,...,1 \rangle} \\
		& \geq & -\tilde{C}^{\tp^+ -2}\frac{1}{\varepsilon}n - \left(\tp^+ -2\right) \tilde{C}^{\tp^+ -4}\left(\frac{\tilde{C}^2 n}{\varepsilon}\right) - \frac{1}{\varepsilon} \tilde{C}^{\tp^+-2} n \tau  \\
		& = & - \frac{\tilde{C}^{\tp^+ -2}n\left[1 + \left(\tp^+ -2\right) + \tau \right]}{\varepsilon} \\
		& = & - \frac{\tilde{C}^{\tp^+ -2}n\left[\tp^+ + \tau -1\right]}{\varepsilon} \\
	\end{eqnarray*}
	%Since $\log{\left(|\tilde{C}|\right)}$ is constant, there exists a constant $D u_{\varepsilon}$ such that we can write 
	%$$\log{\left(|\tilde{C}|\right)} := \frac{D u_{\varepsilon,j}}{\varepsilon}.$$
	%It follows that 
	%\begin{eqnarray*}
	%	-\Delta_{\tp(\cdot)} u_{\varepsilon,\; j}(x) & \geq & -\tilde{C}^{\tp(\cdot) -2}\frac{1}{\varepsilon}n - \left(\tp^- -2\right) \tilde{C}^{\tp(\cdot) -4}\left(\frac{\tilde{C}^2 n}{\varepsilon}\right) - \frac{D u_{\varepsilon}}{\varepsilon}  \tilde{C}^{\tp(\cdot) -2} \tilde{C}n \kappa \\
	%	& \geq & -\tilde{C}^{\tp(\cdot) -2}\frac{1}{\varepsilon}n - \left(\tp^- -2\right) \tilde{C}^{\tp(\cdot) -4}\left(\frac{\tilde{C}^2 n}{\varepsilon}\right) - \frac{1}{\varepsilon}  \tilde{C}^{\tp(\cdot) -2} n \tau  \\
	%	& = & - \frac{\tilde{C}^{\tp(\cdot) -2}n\left[1 + \left(\tp^- -2\right) + \tau \right]}{\varepsilon} \\
	%	& = & - \frac{\tilde{C}^{\tp(\cdot) -2}n\left[\tp^- + \tau -1\right]}{\varepsilon} \\
	%\end{eqnarray*}
	and the claim is proved.
\end{proof}
Next, employing Fatou's Lemma we have
\begin{eqnarray}\label{fatou}
	\int_{\Omega} \liminf_{j \to \infty} \left(-\Delta_{\tp(\cdot)} u_{\varepsilon,\; j}\right)\psi dx \leq \liminf_{j \to \infty} \int_{\Omega} \left(-\Delta_{\tp(\cdot)} u_{\varepsilon,\; j}\right)\psi dx 
\end{eqnarray}
Since $D^2 \varphi_j(x) \to D^2 \varphi(x)$ for almost every $x$ \cite[Sections 6.2 and 6.3]{EG}, then
\begin{eqnarray}\label{epsjtoeps}
	\liminf_{j \to \infty} \left(-\Delta_{\tp(\cdot)} u_{\varepsilon,\; j}(x)\right) = -\Delta_{\tp(\cdot)} u_{\varepsilon}(x)
\end{eqnarray}
for almost every $x$, and we have the desired result.  Indeed, 
\begin{eqnarray*}
	\lim_{j \to \infty} \int_{\Omega} |D u_{\varepsilon,\; j}|^{\tp(\cdot)-2} \ip{D u_{\varepsilon,\; j}}{D \psi} dx & = & \int_{\Omega} |D u_{\varepsilon}|^{\tp(\cdot)-2} \ip{D u_{\varepsilon}}{D \psi} dx \\
	& = & \lim_{j \to \infty} \int_{\Omega} \left(-\Delta_{\tp(\cdot)}u_{\varepsilon,\; j}(x)\right) \psi dx
\end{eqnarray*} 
by Equation \eqref{limweakepsj} and Equation \eqref{IBPimplies} respectively.  Then by Equation \eqref{fatou} and Equation \eqref{epsjtoeps}, respectively, we have
$$\liminf_{j \to \infty} \int_{\Omega} \left(-\Delta_{\tp(\cdot)}u_{\varepsilon,\; j}(x)\right) \psi dx \; \; \geq \;\; \int_{\Omega} \liminf_{j \to \infty} \left(-\Delta_{\tp(\cdot)} u_{\varepsilon,\; j}\right)\psi dx = \int_{\Omega} \left(-\Delta_{\tp(\cdot)}u_{\varepsilon}(x)\right) \psi dx.$$
It follows that we now have Equation \eqref{NTS}, or
\begin{equation*}
	\int_{\Omega} |D u_{\varepsilon}|^{\tp(\cdot)-2} \ip{D u_{\varepsilon}}{D \psi} dx \geq \int_{\Omega} \left(-\Delta_{\tp(\cdot)}u_{\varepsilon}(x)\right) \psi dx.\qedhere
\end{equation*}
\end{proof}

\subsection[1<p+<2]{The Singular Case}\label{sincase}
We now turn our attention to the singular case -- that is, the case $1<\tp^+<2$. The proof presents obstacles that we did not encounter in Section \ref{degcase}.  Here, as mentioned in the introduction, it is not clear what is meant by
\begin{eqnarray*}
	-\Delta_{{\tp(\cdot)}}u & = & -|D u|^{\tp(\cdot) -2}\left(\Delta u + (\tp(\cdot) -2)  \ip{D^2u \frac{D u}{|D u|}}{\frac{D u}{|D u|}} \right) \\
	& \mbox{}\hspace{2.5ex} & - |D u|^{\tp(\cdot) -2} \log|D u| \ip{D \tp(\cdot)}{D u} 
\end{eqnarray*}
when $D u = 0$.  To overcome this, we regularize and apply the identity
\begin{eqnarray*}
	\int_{\Omega}\left(-\div \left(|D u|^2 + \delta \right)^{\frac{\tp(\cdot)-2}{2}} D u\right) \psi dx
	= \int_{\Omega}\left(|D u|^2 + \delta \right)^{\frac{\tp(\cdot)-2}{2}} \ip{D u}{D \psi} dx
\end{eqnarray*} 
and attempt to pass to the limit when we let $\delta \to 0$. This will also permit us to identify integrable lower bounds and apply Fatou's Lemma as in the previous result.

\begin{thm}\label{main2case2}
	Weak supersolutions and viscosity supersolutions to the $\textnormal{\tp}(\cdot)$-Laplace equation coincide when $1<\tp^+<2$.
\end{thm}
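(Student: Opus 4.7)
The plan is to follow the blueprint of Theorem \ref{main2case1}, but first to regularize the equation by a parameter $\delta > 0$ so that the non-divergence form of the $\tp(\cdot)$-Laplacian remains meaningful when $Du$ vanishes. Concretely, I work with
\[
-\Delta_{\tp(\cdot),\delta} w := -\div\bigl((|Dw|^2 + \delta)^{(\tp(\cdot)-2)/2} Dw\bigr),
\]
establish the analogue of inequality \eqref{NTS} at the regularized level, and then send $\delta \to 0$. The setup mirrors the degenerate case: truncating by $\min\{u,k\}$ reduces to locally bounded $u$; I form $u_\varepsilon$ as in Equation \eqref{infconv2} with $\tq > \tp^+/(\tp^+-1)$, so that by Lemmas \ref{infconvprop1} and \ref{infconvprop2}, $(u_\varepsilon)$ is an increasing sequence of viscosity supersolutions in $\Omega_{r(\varepsilon)}$ and $\varphi := u_\varepsilon - C|\cdot|^2$ is concave there. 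Aleksandrov's Theorem then furnishes $D^2 u_\varepsilon$ almost everywhere with $D^2 u_\varepsilon \leq 2C I_n$ in the sense of \eqref{secderivest}.

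Mollify $\varphi$ to a sequence of smooth concave $\varphi_j$ and put $u_{\varepsilon,j} := \varphi_j + C|\cdot|^2$. Integration by parts for the smooth $u_{\varepsilon,j}$ yields, for any nonnegative $\psi \in C_0^\infty(\Omega_{r(\varepsilon)})$,
\[
\int_\Omega (|Du_{\varepsilon,j}|^2 + \delta)^{(\tp(\cdot)-2)/2} \ip{Du_{\varepsilon,j}}{D\psi}\, dx = \int_\Omega \bigl(-\Delta_{\tp(\cdot),\delta} u_{\varepsilon,j}\bigr)\psi\, dx.
\]
Expanding $-\Delta_{\tp(\cdot),\delta} u_{\varepsilon,j}$ and invoking the local boundedness of $Du_{\varepsilon,j}$ and $D\tp(\cdot)$ together with the upper Hessian bound of Claim \ref{hessepsj} produces a $j$-uniform pointwise lower bound on $-\Delta_{\tp(\cdot),\delta} u_{\varepsilon,j}$ over $\mathrm{supp}(\psi)$, directly analogous to Claim \ref{lapvarepsj}; the regularization is precisely what keeps the singular factors $(|Du|^2+\delta)^{(\tp(\cdot)-4)/2}$ and $\log(|Du|^2+\delta)$ finite for each fixed $\delta$. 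Letting $j \to \infty$ by dominated convergence on the left and Fatou's Lemma on the right yields the $\delta$-regularized inequality
\[
\int_\Omega (|Du_\varepsilon|^2 + \delta)^{(\tp(\cdot)-2)/2} \ip{Du_\varepsilon}{D\psi}\, dx \geq \int_\Omega \bigl(-\Delta_{\tp(\cdot),\delta} u_\varepsilon\bigr)\psi\, dx.
\]

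To finish, I let $\delta \to 0$ and then $\varepsilon \to 0$. The Lipschitz bound on $u_\varepsilon$ and dominated convergence send the left side to $\int_\Omega |Du_\varepsilon|^{\tp(\cdot)-2} \ip{Du_\varepsilon}{D\psi}\, dx$. On the right, I partition $\Omega$ into $\{Du_\varepsilon \neq 0\}$ and $\{Du_\varepsilon = 0\}$: on the first set, at every twice-differentiable point the second-order Taylor polynomial of $u_\varepsilon$ is a $\mathcal{TB}$-test function with nonzero gradient, so the viscosity property yields $-\Delta_{\tp(\cdot)} u_\varepsilon \geq 0$ almost everywhere and hence $-\Delta_{\tp(\cdot),\delta} u_\varepsilon \to -\Delta_{\tp(\cdot)} u_\varepsilon \geq 0$ a.e.; on the second set, Lemma \ref{lscuscgradeps0}(B) gives $u_\varepsilon(x) = u(x)$, and a Lebesgue-density and approximate-differentiability argument forces $D^2 u_\varepsilon(x) = 0$ almost everywhere there, so $-\Delta_{\tp(\cdot),\delta} u_\varepsilon$ vanishes a.e.\ on this set uniformly in $\delta$. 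A final application of Fatou's Lemma then yields $\int_\Omega |Du_\varepsilon|^{\tp(\cdot)-2}\ip{Du_\varepsilon}{D\psi}\, dx \geq 0$, showing $u_\varepsilon$ is a weak supersolution; Corollary \ref{weakharm} identifies $u_\varepsilon$ as $\tp(\cdot)$-superharmonic, and closure of the $\tp(\cdot)$-superharmonic class under pointwise increasing limits as $\varepsilon \to 0$ concludes that $u$ itself is $\tp(\cdot)$-superharmonic, hence a weak supersolution. The principal obstacle lies in this last Fatou step: producing a $\delta$-independent integrable lower bound for $-\Delta_{\tp(\cdot),\delta} u_\varepsilon \cdot \psi$. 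The apparently divergent factors $(|Du_\varepsilon|^2+\delta)^{(\tp(\cdot)-4)/2}$ and $\log(|Du_\varepsilon|^2+\delta)$ must be tamed by coupling with compensating powers of $|Du_\varepsilon|$---using identities such as $(|Du|^2+\delta)^{(\tp(\cdot)-4)/2}|Du|^2 \leq (|Du|^2+\delta)^{(\tp(\cdot)-2)/2}$---and by invoking the vanishing of $D^2 u_\varepsilon$ on $\{Du_\varepsilon = 0\}$ to neutralize the region where no such coupling is available.
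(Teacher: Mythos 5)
Your overall scaffold matches the paper's: regularize by $\delta$, pass through the mollified functions $u_{\varepsilon,j}$, establish the $\delta$-regularized inequality \eqref{NTS2}, then send $\delta\to 0$ via Fatou and finally $\varepsilon\to 0$. The splitting into $\{Du_\varepsilon\neq 0\}$ and $\{Du_\varepsilon=0\}$ also agrees with the paper (which uses inequality \eqref{hessiangeq}, $D^2u_\varepsilon\le 0$ on $\{Du_\varepsilon=0\}$, rather than your slightly stronger $D^2u_\varepsilon=0$; either suffices for Fatou).

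However, there is a genuine gap in the Fatou step, and it is exactly the one you flag as the ``principal obstacle'' without actually closing. The constant Hessian bound you carry over from Claim~\ref{hessepsj}, namely $D^2 u_{\varepsilon,j}\le \tfrac{1}{\varepsilon}I_n$, is not enough in the singular case. After sending $j\to\infty$, the dangerous term is
\[
-\bigl(|Du_\varepsilon|^2+\delta\bigr)^{(\tp(\cdot)-2)/2}\,\Delta u_\varepsilon \;\ge\; -\frac{n}{\varepsilon}\bigl(|Du_\varepsilon|^2+\delta\bigr)^{(\tp(\cdot)-2)/2},
\]
and since $\tp(\cdot)<2$, the right-hand side is not bounded below uniformly in $\delta$ near $\{Du_\varepsilon\text{ small but nonzero}\}$; the coupling identity $(|Du|^2+\delta)^{(\tp(\cdot)-4)/2}|Du|^2\le(|Du|^2+\delta)^{(\tp(\cdot)-2)/2}$ only tames the \emph{second} term, and the vanishing of $D^2u_\varepsilon$ on $\{Du_\varepsilon=0\}$ does not reach the part of $\Omega$ where $|Du_\varepsilon|$ is merely small. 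The paper's missing ingredient is a \emph{gradient-dependent} Hessian bound (Claim~\ref{D2est}):
\[
D^2u_\varepsilon(x_0)\;\le\;\frac{\tq-1}{\varepsilon}\,|Du_\varepsilon(x_0)|^{\,(\tq-2)/(\tq-1)}\,I_n,
\]
obtained from Lemma~\ref{lscuscgradeps0}(A)--(B) by localizing the infimum to a small ball $B_{\alpha_m}(x)$ with $\alpha_m\to |Du_\varepsilon(x_0)|^{1/(\tq-1)}\varepsilon$ and exploiting semi-concavity. The extra power $|Du_\varepsilon|^{(\tq-2)/(\tq-1)}$ is precisely what compensates the singular weight: the product $(|Du_\varepsilon|^2+\delta)^{(\tp(\cdot)-2)/2}\,|Du_\varepsilon|^{(\tq-2)/(\tq-1)}$ stays bounded once $\tq$ is chosen large relative to $1/(\tp(\cdot)-1)$, producing the $\delta$-uniform constant lower bound \eqref{simplest} that licenses Fatou's Lemma. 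Without this refined estimate, your Fatou step does not go through; supplying the analogue of Claim~\ref{D2est} is the essential piece your proposal is missing.
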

\begin{proof}
	We assume $1< \tp^+ < 2$ and will use the infimal convolution as in Equation \eqref{infconv2}.  That is, we will use 
	\begin{eqnarray*}
		u_{\varepsilon}(x) := \inf_{y \in \Omega} \bigg(u(y) + \frac{|x-y|^{\tq}}{ \textnormal{\texttt{q}} \varepsilon^{\textnormal{\texttt{q}}-1}}\bigg),
	\end{eqnarray*}
	where $\tq > \frac{\tp^-}{\tp^- - 1}$. Note that $\tq >2$.
	Aleksandrov's Theorem and Claims \ref{jconv} and \ref{hessepsj} apply as before and show that $-\Delta_{\tp(\cdot)} u_{\varepsilon} \geq 0$ a.e. in $\Omega_{r(\varepsilon)}$ and
	\begin{eqnarray}\label{NTS2}
		\int_{\Omega} \left(|D u_{\varepsilon}|^2 + \delta \right)^{\frac{\tp(\cdot)-2}{2}}\ip{D u_{\varepsilon}}{D \psi} dx \geq \int_{\Omega} -\div \left(\left( D u_{\varepsilon}|^2 + \delta \right)^{\frac{\tp(\cdot)-2}{2}} D u_{\varepsilon} \right) \psi dx
	\end{eqnarray}
		for $\psi \in C_0^{\infty}(\Omega)$, $\psi \geq 0$ and for any $\delta > 0$.  We want to let $\delta \to 0$ and obtain
	\begin{eqnarray*}
		\int_{\Omega} |D u_{\varepsilon}|^{\tp(\cdot) - 2}\ip{D u_{\varepsilon}}{D \psi} dx \geq 0.
	\end{eqnarray*}
	Clearly, if we let $\delta \to 0$, we have 
	\begin{eqnarray*}
		\int_{\Omega} \left(|D u_{\varepsilon}|^2 + \delta \right)^{\frac{\tp(\cdot)-2}{2}}\ip{D u_{\varepsilon}}{D \psi} dx \;\;\; \to \;\;\;
		\int_{\Omega} |D u_{\varepsilon}|^{\tp(\cdot) - 2}\ip{D u_{\varepsilon}}{D \psi} dx.
	\end{eqnarray*}
	It remains to show 
	\begin{eqnarray}\label{RTS}
		\int_{\Omega} -\div \left(\left( D u_{\varepsilon}|^2 + \delta \right)^{\frac{\tp(\cdot)-2}{2}} D u_{\varepsilon} \right) \psi dx \geq 0
	\end{eqnarray}
	as $\delta \to 0$.
	
	\begin{claim}\label{D2est}
		By the definition of $u_{\varepsilon}$,
		$$D^2 u_{\varepsilon}(x_0) \leq \frac{\tq -1}{\varepsilon} |D u_{\varepsilon}(x_0)|^{\frac{\tq -2}{\tq -1}}I_n.$$
	\end{claim}
	\begin{proof}[Proof of Claim \ref{D2est}]
	\renewcommand{\qedsymbol}{$\blacksquare$}	
		Consider a point $x_0$  and assume that $D u_{\varepsilon}(x_0)$ and $D^2 u_{\varepsilon}(x_0)$ exist.  By Item \ref{wutbB} of Lemma \ref{lscuscgradeps0}, we have 
		$|y-x_0| \leq |D u_{\varepsilon}(x_0)|^{\frac{1}{\tq -1}}\varepsilon$ for every $y \in Y_{\varepsilon}(x_0)$, where $Y_{\varepsilon}$ is defined by Equation \eqref{Yeps}.  Additionally, Item \ref{wutbA} of Lemma \ref{lscuscgradeps0} implies that for every $m$, there is a radius $R_m$ and a ball $B_{R_m}(x_0)$ such that
		\begin{eqnarray*}
			|y-x| \leq |D u_{\varepsilon}(x_0)|^{\frac{1}{\tq -1}}\varepsilon + \frac{1}{m} =: \alpha_m
		\end{eqnarray*}
		for every $x \in B_{R_m}(x_0)$ and every $y \in Y_{\varepsilon}$.  It follows that 
		\begin{eqnarray*}
			u_{\varepsilon}(x) = \inf_{y \in B_{\alpha_m}(x)} \left(u(y) + \frac{|x-y|^{\tq}}{\tq \varepsilon^{\tq -1}} \right)
		\end{eqnarray*}
		for every $x \in B_{R_m}(x_0)$.  Let 
		$$\varphi_y(x) := u(y) + \frac{|x-y|^{\tq}}{\tq \varepsilon^{\tq -1}}.$$  
		Then for every $y \in B_{\alpha_m}(x)$, $\varphi_y$ is smooth and 
		$$D^2 \varphi_y(x) \leq \frac{\tq -1}{\varepsilon^{\tq -1}} \alpha_m^{\tq -2}I_n.$$
		Because $u_{\varepsilon}$ is semi-concave and because the above holds in particular when $y \in Y_\varepsilon$, it follows that almost everywhere in $B_{\alpha_m}(x)$ we have
		$$D^2 u_{\varepsilon} \leq \frac{\tq -1}{\varepsilon^{\tq -1}} \alpha_m^{\tq -2}I_n.$$
		We let $m \to \infty$ and obtain 
		\begin{equation*}
			D^2 u_{\varepsilon}(x_0) \leq \frac{\tq -1}{\varepsilon} |D u_{\varepsilon}(x_0)|^{\frac{\tq -2}{\tq -1}}I_n.\qedhere
		\end{equation*}
	\end{proof}
	The claim above permits us to obtain the inequality
	\begin{eqnarray}\label{hessiangeq}
		D^2 u_{\varepsilon}(x) \leq 0 \;\; \textnormal{if} \;\; D u_{\varepsilon}(x)=0.
	\end{eqnarray}
	\begin{claim}\label{lapest}
		If $D u_{\varepsilon}(x) \ne 0$, then we have
		\begin{equation*}\label{estim2}
			\begin{aligned}
			     -\left(|D u_{\varepsilon}|^2 + \delta \right)^{\frac{\textnormal{\tp}(\cdot) -2}{2}} \big(\Delta u_{\varepsilon}+\frac{(\textnormal{\tp}(\cdot) -2)}{|D u_{\varepsilon}|^2+\delta}  \ip{D^2u_{\varepsilon} \;  u_{\varepsilon}}{D u_{\varepsilon}} \big)& \\
			    -\left(|D u_{\varepsilon}|^2 + \delta \right)^{\frac{\textnormal{\tp}(\cdot) -2}{2}} \log(|D u_{\varepsilon}|^2+\delta) \ip{D \textnormal{\tp}(\cdot)}{D u_{\varepsilon}}& \geq -\frac{n}{\varepsilon}  |D u_{\varepsilon}|^{\frac{\textnormal{\tp}^+ + \tq-4}{\tq -1}} \\
			    &\hspace{2.5ex}\times\bigg(\tq -1 + \textnormal{\tp}^+ -2 + C_0 \widehat{\kappa} 
			    D u_{\varepsilon}^{\frac{1}{\tq - 1}}\bigg),
			\end{aligned}
		\end{equation*}
		where $C_0 = \log(|D u_{\varepsilon}|^2+\delta)\varepsilon$ and $|D\tp(\cdot)|\leq \widehat{\kappa}$.
	\end{claim}
	\begin{proof}[Proof of Claim \ref{lapest}]
	\renewcommand{\qedsymbol}{$\blacksquare$}
		To improve the readability of our work, we make use of the notation 
		\begin{equation*}
			\begin{aligned}
				\Gamma u_\varepsilon &:=-\left(|D u_{\varepsilon}|^2 + \delta \right)^{\frac{\textnormal{\tp}(\cdot) -2}{2}} \big(\Delta u_{\varepsilon}  +  \frac{(\textnormal{\tp}(\cdot) -2)}{|D u_{\varepsilon}|^2+\delta}  \ip{D^2u_{\varepsilon} \;  Du_{\varepsilon}}{D u_{\varepsilon}} \big) \\
				&\hspace{2.5ex}-\left(|D u_{\varepsilon}|^2 + \delta \right)^{\frac{\textnormal{\tp}(\cdot) -2}{2}} \log(|D u_{\varepsilon}|^2+\delta) \ip{D \textnormal{\tp}(\cdot)}{D u_{\varepsilon}}.
			\end{aligned}
		\end{equation*}
		We have
		\begin{equation}\label{eig}
			\begin{aligned}
				\Gamma u_\varepsilon &\geq -\left(|D u_{\varepsilon}|^2 + \delta \right)^{\frac{\textnormal{\tp}^+ -2}{2}} \times \big( \frac{n(\texttt{q}-1)}{\varepsilon} |D u_{\varepsilon}|^{\frac{\tq-2}{\tq -1}} +  \frac{(\textnormal{\tp}^+ -2)}{D u_{\varepsilon}^2+\delta}  \frac{n(\texttt{q}-1)}{\varepsilon} |D u_{\varepsilon}|^{\frac{3\tq-4}{\tq -1}} \big) \\
				&\hspace{2.5ex} -\left(|D u_{\varepsilon}|^2 + \delta \right)^{\frac{\textnormal{\tp}^+ -2}{2}} \log(|D u_{\varepsilon}|^2+\delta) n \widehat{\kappa} |D u_{\varepsilon}| \\ 
				&\geq -n \frac{\tq-1}{\varepsilon} \left(|D u_{\varepsilon}|^2 + \delta \right)^{\frac{\textnormal{\tp}^+ -2}{2}} |D u_{\varepsilon}|^{\frac{\tq-2}{\tq -1}} \big(1 + \frac{(\textnormal{\tp}^+ -2)}{|D u_{\varepsilon}|^2+\delta} |D u_{\varepsilon}|^{2}
				\big) \\
				&\hspace{2.5ex} -n\widehat{\kappa} |D u_{\varepsilon}|  \left(|D u_{\varepsilon}|^2 + \delta \right)^{\frac{\textnormal{\tp}^+ -2}{2}}\log(|D u_{\varepsilon}|^2+\delta) \\ 
				&=  -\frac{n}{\varepsilon} \left(|D u_{\varepsilon}|^2 + \delta \right)^{\frac{\textnormal{\tp}^+ -2}{2}} |D u_{\varepsilon}|^{\frac{\tq-2}{\tq -1}}
				\bigg((\tq -1)+  \frac{(\textnormal{\tp}^+ -2)}{|D u_{\varepsilon}|^2+\delta} |D u_{\varepsilon}|^2 + C_0 \widehat{\kappa} |D u_{\varepsilon}|^{\frac{1}{\tq - 1}}\bigg), 
			\end{aligned}
		\end{equation}
		where 
		$$C_0 = \varepsilon \log(|D u_{\varepsilon}|^2+\delta).$$
		Continuing with the estimate, 
		\begin{equation*}
	        \begin{aligned}
	            &-n \frac{1}{\varepsilon} \left(|D u_{\varepsilon}|^2 + \delta \right)^{\frac{\textnormal{\tp}^+ -2}{2}} |D u_{\varepsilon}|^{\frac{\tq-2}{\tq -1}} \\
			    &\times\bigg((\tq -1)+ \frac{(\textnormal{\tp}^+ -2)}{|D u_{\varepsilon}|^2+\delta} |D u_{\varepsilon}|^2 + C_0 \widehat{\kappa} |D u_{\varepsilon}|^{\frac{1}{\tq - 1}}\bigg) \geq -\frac{n}{\varepsilon}  |D u_{\varepsilon}|^{\frac{\tp^+ + \tq-4}{\tq -1}} \\
			    &\hspace{51ex}\times\bigg(\tq -1 +  \tp^+ -2 + C_0 \widehat{\kappa} 
			    D u_{\varepsilon}^{\frac{1}{\tq - 1}}\bigg)
	        \end{aligned}
		\end{equation*}
		and this combined with \eqref{eig} completes the claim.
	\end{proof}
	Now, since $u_{\varepsilon}$ is Lipschitz continuous and $\tq > \frac{\tp}{\tp-1}$, then by employing Claim \ref{lapest}, we can write:
	\begin{equation}\label{simplest}
		\begin{aligned}
		    -\left(|D u_{\varepsilon}|^2 + \delta \right)^{\frac{\textnormal{\tp}(\cdot) -2}{2}} \big(\Delta u_{\varepsilon}+ \frac{(\textnormal{\tp}(\cdot) -2)}{|D u_{\varepsilon}|^2+\delta}  \ip{D^2u_{\varepsilon} \;  u_{\varepsilon}}{D u_{\varepsilon}} \big)& \\
		    -\left(|D u_{\varepsilon}|^2 + \delta \right)^{\frac{\textnormal{\tp}(\cdot) -2}{2}} \log(|D u_{\varepsilon}|^2+\delta) \ip{D \textnormal{\tp}(\cdot)}{D u_{\varepsilon}}
		    &\geq -C.
		\end{aligned}
	\end{equation}
	It follows that we can now apply Fatou's Lemma and use Equation \eqref{hessiangeq} to obtain
	\begin{eqnarray*}
		&&\liminf_{\delta \to 0} \int_{\Omega} -\div \left(\left(|D u_{\varepsilon}|^2 + \delta \right)^{\frac{\tp - 2}{2}}D u_{\varepsilon}\right) \psi dx \\
		&\geq& \int_{\Omega \setminus \{D u_{\varepsilon} = 0\}} \liminf_{\delta \to 0} \left(-\div \left(\left(|D u_{\varepsilon}|^2 + \delta \right)^{\frac{\tp(\cdot)-2}{2}} D u_{\varepsilon}\right) \psi \right) dx \\
		&=& \int_{\Omega \setminus \{D u_{\varepsilon} = 0\}} -\div \left(|D u_{\varepsilon}|^{\tp(\cdot)-2} D u_{\varepsilon}\right) \psi dx \\
		&=& \int_{\Omega \setminus \{D u_{\varepsilon} = 0\}} -\Delta_{\tp(\cdot)} u_{\varepsilon} \psi dx \\
		&\geq& 0,
	\end{eqnarray*}
	where we utilize comments preceeding Equation \eqref{NTS2} for the last inequality, giving us Equation \eqref{RTS}.  Then letting $\delta \to 0$ in Equation \eqref{NTS2} and following the techniques utilized in Subsection \ref{degcase} gives us 
		\begin{equation*}
	    \int_{\Omega} \left|Du_\varepsilon\right|^{\tp(\cdot)-2}\ip{Du_\varepsilon}{D\psi}dx \geq 0,
	\end{equation*}
completing the proof of Case $2$.
\end{proof}

\subsection[main]{The Main Result}\label{mainresult}
We have shown that viscosity supersolutions and $\tp(\cdot)$-superharmonic  solutions to the $\tp(\cdot)$-Laplace equation coincide.
An analogous argument shows viscosity subsolutions and $\tp(\cdot)$-subharmonic solutions to the $\tp(\cdot)$-Laplace equation are also the same class of solution.  
The Main Theorem follows as an immediate consequence of Theorem \ref{main2}, that analogous argument, and Corollary \ref{weakharm}.
\begin{main}
Potential theoretic weak solutions and viscosity solutions to the \\ $\textnormal{\tp}(\cdot)$-Laplace equation coincide.
\end{main}


\begin{thebibliography}{HHKLM}
\bibitem[AM]{AM} Acerbi, E.; Mingione, G.  Regularity Results for a Class of Functional with Non-standard Growth.  Arch. Ration. Mech. Anal.  \emph{156} (2) \textbf{2001}, 121--140.
\bibitem[Y]{Yu}Alkhutov, Yu. A. The Harnack Inequality and the H\"{o}lder Property of Solutions of Nonlinear Elliptic Equations with Nonstandard Growth Condition. Differ. Equ., \emph{33} (12) \textbf{1997}, 1653--1663, 1726.
 \bibitem[BF]{BF}Bieske, Thomas.; Freeman, Robert D. Equivalence of weak and viscosity solutions to the $\textnormal{\tp}(\cdot)$-Laplacian in Carnot groups.  Analysis and Mathematical Physics.  \textbf{2018}, \emph{3} (3), 201--294.  DOI:  10.1007/s13324-018-0266-0.
 \bibitem[CLR]{CLR}Chen, Y., Levine, S., Rao, R.  Functionals with $\tp(\cdot)$-growth in image processing. Department of Mathematics and Computer Science Technical Report \emph{04-01}. Available at www.mathcs.duq.edu/~sel/CLR05SIAPfinal.pdf
\bibitem[CIL]{CIL:UGTVS}Crandall, Michael.; Ishii, Hitoshi.; Lions, Pierre-Louis.  User's Guide to Viscosity Solutions of Second Order Partial Differential Equations. Bull. of Amer. Math. Soc. \textbf{1992}, \emph{27} (1), 1--67.
\bibitem[CU]{CU} Cruz-Uribe, David,  Fiorenza, Alberto. \textbf{(2013)}. Variable Lebesgue Spaces: Foundations and Harmonic Analysis. 10.1007/978-3-0348-0548-3. 
\bibitem[D]{D}Diening, Lars. Theoretical and Numerical Results for Electrorheological Fluids, Ph.D. thesis, University of Freiburg, Germany. \textbf{2002}.
%\bibitem[DHHR]{DHHR}Diening, Lars.; Harjulehto, Petteri.; H\"{a}st\"{o}, Peter.; R\r{u}\v{z}i\v{c}ka. Lebesgue and Sobolev Spaces with Variable Exponents , Lecture Notes in Mathematics 2017, Springer: Heidelberg. \textbf{2011}. 
\bibitem[EG]{EG}Evans, L.C.; Gariepy, R.F.  \emph{Measure Theory and Fine Properties of Functions}.  Studies in Advanced Mathematics: Boca Rotan, FL: CRC Press. \textbf{1992}
\bibitem[FZ]{FZ}Fan, X.; Zhao, D.  A Class of De Giorgi Type and H\"{o}lder continuity.  Nonlinear Anal. \emph{36} (3), \textbf{1999}, 295--318.
%\bibitem[FS]{FS:HG}Folland, G.B.; Stein, Elias M. Hardy Spaces on Homogeneous Groups; Princeton University Press: Princeton, NJ. \textbf{1982}.
\bibitem[HHKLM]{HHKLM}Harjulehto, P.; H\"{a}st\"{o}, P.; Koskenoja, M.; Lukkari, T.; Marola, N.  An Obstacle Problem and Superharmonic Functions with Nonstandard Growth. Nonlinear Anal. \emph{67} (\textbf{2007}), 3424--3440.
%\bibitem[HHKV1]{cap}Harjulehto, Petteri.; H\"{a}st\"{o}, Peter.; Koskenoja, Mika.; Varonen, Susanna. Sobolev Capacity on the Space $W^{1, \tp(\cdot)}(\mathbb{R}^n)$. J. Funct. Spaces Appl. \emph{1} (1), \textbf{2003}, 17--33.
%\bibitem[HHKV2]{HHKV}Harjulehto, Petteri.; H$\ddot{a}$st$\ddot{o}$, Peter.; Koskenoja, Mika.; Varonen, Susanna. The Dirichlet Energy Integral and Variable Exponent Sobolev Spaces with Zero Boundary Values. Potential Anal. \emph{25} (\textbf{2006}), 205--222.
%\bibitem[HHLN]{HHLN}Harjulehto, Petteri.; H$\ddot{a}$st$\ddot{o}$, Peter.; V$\breve{a}$ L$\hat{e}$, $\acute{U}$t.; Nuortio, Matti.  Overview of Differential Equations with Nonstandard Growth. Elsevier, \textbf{2009}.
\bibitem[HHP1]{HHP}Harjulehto, Petteri.; H\"{a}st\"{o}, Peter.; Pere, Mikko. Variable Exponent Sobolev Spaces on Metric Measure Spaces, Functiones et Approximatio. \emph{XXXVI}, \textbf{2006}, 79--94.
\bibitem[HHP2]{HHP2}Harjulehto, Petteri.; H\"{a}st\"{o}, Peter.; Pere, Mikko. Variable Exponent Sobolev Spaces on Metric Measure Spaces, the Hardy-Littlewood Maximal Operator.  Real Anal. Exchange. \emph{30}, (1), (\textbf{2004-2005}), 87--104.
\bibitem[HKL]{HKL}Harjulehto, Petteri.; Kinnunen, J.; Lukkari, Teemu. Unbounded Supersolutions of Nonlinear Equations with Nonstandard Growth. Bound. Value Probl. \textbf{2007}, Art. ID 48348, 20 pp.
\bibitem[HKM]{HKM:NPT} Heinonen, Juha.; Kilpel\"{a}inen, Tero.; Martio,
Olli. Nonlinear Potential Theory of Degenerate Elliptic Equations; Oxford Mathematical Monographs; Oxford University Press: New York, \textbf{1993}.
\bibitem[JJ]{JJ}Julin, V.; Juutinen, P.  A New Proof for the Equivalence of Weak and Viscosity Solutions for the $\tp$-Laplace Equation.  Communications in PDE \emph{37} 5 (\textbf{2012}), 934--946.
\bibitem[JLM]{JLM} Juutinen, P.; Lindqvist, P.; Manfredi, J.J.  On the equivalence of viscosity solutions and weak solutions for a quasi-linear equation. SIAM J. Math. Anal. \emph{33}(3), \textbf{2001}, 699-717.
\bibitem[JLP]{JLP}Juutinen, Petri.; Lukkari, Teemu.; Parviainen, Mikko.  Equivalence of Viscosity and Weak Solutions for the $\tp(\cdot)$-Laplacian. 
Ann. I. H. Poincar\'{e}-AN 27 (\textbf{2010}), 1471--1487. 
%\bibitem[K]{K}Kilpel\"{a}inen, T.  A Remark on the Uniqueness of Quasi-Continuous Functions.  Ann.  Acad.  Sci.  Fenn.,  Math. \emph{23}, \textbf{1998}, 261--262.
%\bibitem[KKM]{KKM}Kilpel\"{a}inen, T.; Kinnunen, J.; Martio, O.  Sobolev Spaces with Zero Boundary Values on Metric Spaces.  Potential Anal. \emph{12}, \textbf{2000}, 233--247.
%\bibitem[KM]{KM}Kinnunen, J.; Martio, O.  The Sobolev Capacity on Metric Spaces.  Ann. I. H. Poincar\'{e}-AN \emph{21}, \textbf{1996}, 367--382.
\bibitem[KR]{KR}Kov\'{a}c\v{i}k, O.; R\'{a}kosn\'{i}k, J. On spaces $L^{\tp(\cdot)}$ and $W^{1,\tp(\cdot)}$. Czechoslov. Math. J. \emph{41}, \textbf{1991}, 592--618. \textbf{2015}, 1067--1085.
\bibitem[L]{L}Lindqvist, P.  On the Definition and Properties of $\tp$-superharmonic functions.  J. Reine Angew. Math, \emph{365}, \textbf{1986}, 67--69.
\bibitem[MO]{MO}Medina, Mar\'ia; Ochoa, Pablo.  On Viscosity and Weak Solutions for Non-homogeneous $\tp$-Laplace Equations.
\bibitem[M]{M}Moreau J.J. Inf-convolution, sous-additivit\'{e}, convexit\'{e} des fonctions num\'{e}riques,
J. Math. Pures Appl. \emph{49}, \textbf{1970}, 109--154.
\bibitem[R]{R}R\r{u}\v{z}i\v{c}ka. Electrorheological Fluids. Modeling and Mathematical Theory, Springer-Verlag, Berlin. \textbf{2000}.
\bibitem[S]{S}Samko.  Denseness of $C^\infty_0(\mathbb{R}^N)$ in the generalized Sobolev spaces $W ^{m,p(x)}(\mathbb{R}^N)$, in: Direct and Inverse Problems of Mathematical Physics,
Newark, DE, 1997, in: Int. Soc. Anal. Appl. Comput., vol. 5, Kluwer Acad. Publ., Dordrecht, 2000, pp. 333--342.
\bibitem[ST]{ST}  Str\"{o}mberg T. The operation of infimal convolution, Dissertationes Math. 352 \textbf{1996}, 1--58.
\end{thebibliography}
\end{document}